\DeclareSymbolFontAlphabet{\mathbb}{AMSb}   
\DeclareMathOperator{\et}{\text{ét}}
\DeclareMathOperator{\Ass}{Ass}
\newcommand{\clbul}{\mathcal{L}^{\bullet}X}
\DeclareMathOperator{\RHom}{RHom}
\DeclareMathOperator{\cL}{\mathcal{L}}
\DeclareMathOperator{\Spec}{Spec}
\DeclareMathOperator{\Fitt}{Fitt}
\DeclareMathOperator{\nil}{nil}
\DeclareMathOperator{\Id}{Id}
\DeclareMathOperator{\Hom}{Hom}
\DeclareMathOperator{\Ima}{Im}
\DeclareMathOperator{\Inf}{Inf}
\DeclareMathOperator{\Sh}{Sh}
\DeclareMathOperator{\Rhom}{R\mathscr{H}\text{\kern -3pt {\calligra\large om}}\,}
\DeclareMathOperator{\EExt}{\mathscr{E}\text{\kern -3pt {\calligra\large xt}}\,}
\DeclareMathOperator{\M}{M}
\DeclareMathOperator{\sinf}{<}
\DeclareMathOperator{\Ker}{Ker}
\DeclareMathOperator{\car}{car}
\DeclareMathOperator{\Def}{Def}
\DeclareMathOperator{\degtr}{degtr}
\DeclareMathOperator{\X}{X_{\text{proét}}}
\DeclareMathOperator{\Tet}{T_{\text{ét}}}
\DeclareMathOperator{\colim}{colim}
\DeclareMathOperator{\Set}{S_{\text{ét}}}
\DeclareMathOperator{\pr}{pr}
\DeclareMathOperator{\gr}{gr}
\DeclareMathOperator{\Jac}{Jac}
\newcommand{\cll}{\mathcal{L} X}
\DeclareMathOperator{\clp}{\mathcal{L}^{+} }
\DeclareMathOperator{\cLp}{\mathcal{L}^{+} }
\DeclareMathOperator{\clxD}{\mathcal{L}_{d} X}
\newcommand{\clui}{\mathcal{L} U_{i}}
\DeclareMathOperator{\clx}{\mathcal{L} X^{\leq d}}
\DeclareMathOperator{\clrx}{\mathcal{L}_{r}X}
\DeclareMathOperator{\clxd}{\mathcal{L} X^{=d}}
\newaliascnt{numberingbase}{subsubsection}
\numberwithin{equation}{numberingbase}
\newtheoremstyle{thms}{0.7em}{0pt}{\itshape}{}{\bfseries}{.}{ }{}
\theoremstyle{thms}
\newtheorem{conj}[numberingbase]{Conjecture}
\newtheorem{cor}[numberingbase]{Corollaire}
\newtheorem{lemma}[numberingbase]{Lemme}
\newtheorem{lem}[numberingbase]{Lemme}
\newtheorem{prop}[numberingbase]{Proposition}
\newtheorem{Q}[numberingbase]{Question}
\newtheorem{thm}[numberingbase]{Théorème}
\newtheorem{theorem}[numberingbase]{Théorème}
\newtheoremstyle{claims}{0.7em}{0pt}{}{}{\itshape}{.}{ }{}
\theoremstyle{claims}
\newtheorem{claim}[equation]{Claim}
\newtheoremstyle{defs}{0.7em}{0pt}{}{}{\bfseries}{.}{ }{}
\theoremstyle{defs}
\newtheorem{defi}[numberingbase]{Définition}
\newtheorem{exa}[numberingbase]{Exemple}
\newtheorem*{exas}{Exemples}
\newtheorem{rmq}[numberingbase]{Remarque}
\newtheorem*{rmqs}{Remarques}
\Crefname{claim}{Claim}{Claims}
\Crefname{conj}{Conjecture}{Conjectures}
\Crefname{cor}{Corollaire}{Corollaires}
\Crefname{defn}{Définition}{Définitions}
\Crefname{eg}{Exemple}{Exemples}
\Crefname{prop}{Proposition}{Propositions} 
\Crefname{Q}{Question}{Questions}
\Crefname{rem}{Remarques}{Remarques}
\Crefname{theorem}{Théorème}{Théorèmes}
\Crefname{thm}{Théorème}{Théorèmes}
\Crefname{variant}{Variant}{Variants}
\theoremstyle{thms}
\newtheorem{thm-tweak}[subsection]{Théorème}
\Crefname{thm-tweak}{théorème}{théorèmes}
\newtheorem{lemma-tweak}[subsection]{Lemme}
\Crefname{lemma-tweak}{lemme}{Lemmes}
\newtheorem{cor-tweak}[subsection]{Corollaire}
\Crefname{cor-tweak}{corollaire}{corollaires}
\newtheorem{prop-tweak}[subsection]{Proposition}
\Crefname{prop-tweak}{proposition}{propositions} 
\newtheorem{conj-tweak}[subsection]{Conjecture}
\Crefname{conj-tweak}{Conjecture}{Conjectures} 
\theoremstyle{defs}
\newtheorem{defn-tweak}[subsection]{Définition}
\Crefname{defn-tweak}{définition}{définitions}
\newtheorem{eg-tweak}[subsection]{Exemple}
\Crefname{eg-tweak}{exemple}{exemples}
\newtheorem*{rmqs-tweak}{Remarques}
\newtheorem{rmq-tweak}[subsection]{Remarque}
\Crefname{rmq-tweak}{remarque}{remarques}
\newtheoremstyle{subsection-tweak}
   {11pt}
   {3pt}%
   {}
   {}%
   {\bfseries}
   {}%
   {.5em}
   {\thmnumber{\@{#1}{}\@{#2}.}%
    \thmnote{~{\bfseries#3.}}}
\theoremstyle{subsection-tweak}
\newtheorem{pp}[numberingbase]{}
\newcommand{\bpp}{\begin{pp}}
\newcommand{\epp}{\end{pp}}
\theoremstyle{subsection-tweak}
\newtheorem{pp-tweak}[subsection]{}
\renewcommand{\b}{\textbf}
\newcommand{\brems}{\begin{rmqs} \hfill \begin{enumerate}[label=\b{\thenumberingbase.},ref=\thenumberingbase]}
\newcommand{\remi}{\addtocounter{numberingbase}{1} \item}
\newcommand{\erems}{\end{enumerate} \end{rmqs}}
\newcommand{\bexs}{\begin{exas} \hfill \begin{enumerate}[label=\b{\thenumberingbase.},ref=\thenumberingbase]}
\newcommand{\eexs}{\end{enumerate} \end{exas}}
\newcommand{\m}{\item}
\newcommand{\bsm}{\begin{smallmatrix}}
\newcommand{\esm}{\end{smallmatrix}}
\newcommand{\blem}{\begin{lemma}}
\newcommand{\elem}{\end{lemma}}
\newcommand{\bconj}{\begin{conj}}
\newcommand{\econj}{\end{conj}}
\newcommand{\bprob}{\begin{Problem}}
\newcommand{\eprob}{\end{Problem}}
\newcommand{\bq}{\begin{Q}}
\newcommand{\eq}{\end{Q}}
\newcommand{\benum}{\begin{enumerate}[label={{\upshape(\alph*)}}]}
\newcommand{\benuma}{\begin{enumerate}[label={{\upshape(\arabic*)}}]}
\newcommand{\benumr}{\begin{enumerate}[label={{\upshape(\roman*)}}]}
\newcommand{\eenum}{\end{enumerate}}
\newcommand{\bitem}{\begin{itemize}}
\newcommand{\eitem}{\end{itemize}}
\newcommand{\bc}{}
\newcommand{\bd}{\begin{defn}}
\newcommand{\ed}{\end{defn}}
\newcommand{\bex}{\begin{exa}}
\newcommand{\eex}{\end{exa}}
\newcommand{\bcl}{\begin{claim}}
\newcommand{\ecl}{\end{claim}}
\newcommand{\q}{\quad}
\newcommand{\ba}{\begin{aligned}}
\newcommand{\ea}{\end{aligned}}
\newcommand{\be}{\begin{equation}}
\newcommand{\ee}{\end{equation}}
\newcommand{\bpf}{\begin{proof}}
\newcommand{\epf}{\end{proof}}
\newcommand{\bthm}{\begin{thm}}
\newcommand{\ethm}{\end{thm}}
\newcommand{\bprop}{\begin{prop}}
\newcommand{\eprop}{\end{prop}}
\newcommand{\bcor}{\begin{cor}}
\newcommand{\ecor}{\end{cor}}
\newcommand{\brem}{\begin{rmq}}
\newcommand{\erem}{\end{rmq}}
\def\@tocline#1#2#3#4#5#6#7{
    \begingroup 
    \@ifempty{#4}{}{}

    \parindent\z@ \leftskip#3\relax \advance\leftskip\@tempdima\relax
    #5\hskip-\@tempdima
      \ifcase #1
       \or\or \hskip 2em \or \hskip 1em \else \hskip 3em \fi%
      #6\nobreak\relax
    \dotfill\hbox to\@pnumwidth{\@tocpagenum{#7}}\par
    \nobreak
    \endgroup
 }
 \def\l@section{\@tocline{1}{0pt}{1pc}{}{}}
\renewcommand{\tocsection}[3]{%
  \indentlabel{\@ifnotempty{#2}{\makebox[1.3em][l]{%
    \ignorespaces#1 \bfseries{#2}.\hfill}}}\bfseries{#3}
    \vspace{1.5pt}}
\renewcommand{\tocsubsection}[3]{%
  \indentlabel{\@ifnotempty{#2}{\hspace*{-0.5em}\makebox[2.1em][l]{%
    \ignorespaces#1#2.\hfill}}}#3
    \vspace{1.5pt}}
\newcommand\appendix@section[1]{%
  \refstepcounter{section}%
  \orig@section*{Appendix \@Alph\c@section. #1}%
}
\let\orig@section\section
\g@addto@macro\appendix{\let\section\appendix@section}
\newcommand{\wA}{\widehat{A}}
\newcommand{\wB}{\widehat{B}}
\newcommand{\wT}{\widehat{T}}
\newcommand{\cQ}{\mathcal{Q}}
\newcommand{\clrmi}{\mathcal{L}_{r}M_{i}}
\newcommand{\clmi}{\mathcal{L}M_{i}}
\newcommand{\cO}{\mathcal{O}}
\newcommand{\kp}{\mathfrak{p}}
\newcommand{\kq}{\mathfrak{q}}
\newcommand{\g}{\gamma}
\newcommand{\la}{\lambda}
\newcommand{\km}{\mathfrak{m}}
\newcommand{\ra}{\rightarrow}
\newcommand{\cI}{\mathcal{I}}
\newcommand{\bZ}{\mathbb{Z}}
\newcommand{\bql}{\overline{\mathbb{Q}}_{\ell}}
\newcommand{\ab}{\mathbb{A}}
\newcommand{\ev}{ev}
\newcommand{\bt}{\textbf{t}}
\newcommand{\NN}{\mathbb{N}}
\newcommand{\al}{\alpha}
\newcommand{\bG}{\mathbb{G}}
\newcommand{\hra}{\hookrightarrow}
\title{Cohomologie étale des espaces d'arcs }
\author{Alexis Bouthier}
\begin{document}
\maketitle

\begin{center}
\textbf{Résumé:}
\end{center}
 Dans un premier temps, on établit un théorème de structure sur les espaces d'arcs $\cL X$ où  $X$ est un $k$-schéma de type fini, $k$ un corps. Plus précisément, on montre que localement pour la topologie pro-lisse, l'espace d'arcs est un produit de la forme $T\times\ab^{\NN}$, où $T$ est un schéma non-noethérien qui recolle les complétés formels en les différents arcs, stratifié par des $k$-schémas de type fini. Cet énoncé globalise le théorème de Drinfeld-Grinberg-Kazhdan et montre une conjecture de Koll\`{a}r et Nemethi.
On utilise ensuite ce théorème de structure pour construire une catégorie dérivée de faisceaux constructibles, stable par les six opérations et on montre un énoncé de finitude pour la cohomologie. 
\bigskip

\begin{center}
\textbf{Abstract:}
\end{center}
 We establish a structure theorem on the arc space $\cL X$ of a $k$-scheme of finite type. More precisely, we show that the arc space is locally for the pro-smooth toplogy of the form $T\times\ab^{\NN}$ where $T$ is a non-noetherian scheme that glues the different completions, stratified by $k$-schemes of finite type. This statement globalizes Drinfeld-Grinberg-Kazhdan's theorem and proves a conjecture of Koll\`{a}r and Nemethi. Then, we use this theorem to construct a derived category of constructible sheaves, stable by six operations and we show a finiteness result for the cohomology. 

\hypersetup{
    linktoc=page,     
}
\renewcommand*\contentsname{}
\q\\
\tableofcontents

\section*{Introduction}
\subsection{Énoncés principaux}
Soit $k$ algébriquement clos. Soit $f:S'\rightarrow S$, un morphisme de $k$-schémas, il est dit pro-lisse, s'il s'écrit comme une limite projective filtrante de $S$-schémas lisses avec des morphismes de transition affines, arbitraires. Soit $X$ un $k$-schéma de type fini, $\cL X$ son espace d'arcs, on dispose d'une suite croissante d'ouverts $\cL X^{\leq n}$ où l'on borne la valuation de la singularité par $n$.
Avant d'énoncer les principaux théorèmes, on commence par rappeler l'énoncé de Drinfeld-Grinberg-Kazhdan \cite{Dr}, \cite{GK}.
Supposons $\dim X\geq 1$, sans composante isolée, soit $\g(t)\in(\clx)(k)$, alors le théorème de Drinfeld-Grinberg-Kazhdan (DGK) montre qu'il existe un $k$-schéma de type fini $Y$ et un $k$-point $y$ de $Y$ tel que l'on a un isomorphisme de voisinages formels:
\begin{center}
$\cll_{\g(t)}^{\wedge}\simeq (Y\times\ab^{\NN})^{\wedge}_{(y,0)}$.
\end{center}
où $\ab^{\NN}:=\Spec(k[x_{1},x_{2},\dots])$.
Le but est  d'obtenir cet énoncé dans un voisinage, pour une topologie appropriée, de l'arc $\g(t)$.
On renvoie au théorème \ref{fonda} pour un énoncé plus précis.
\begin{theorem}\label{thm1}
Soit $d\in\NN$, il existe un $k$-schéma $Z$ et un morphisme :
\begin{center}
$z: Z\rightarrow\clx$
\end{center}
sujet aux propriétés suivantes:
\begin{enumerate}
	\item 
	$z$ est pro-lisse, affine, surjectif et induit un isomorphisme sur les voisinages formels en tout $k$-point de $Z$.
	\item
	Le schéma $Z$ admet une immersion fermée nilpotente $Z\hookrightarrow T\times\ab^{\NN}$, qui induit un isomorphisme sur les voisinages formels de tout $k$-point et où $T$ est un schéma de type $(S)$ (cf. section \ref{(S)}).	
\end{enumerate}
\end{theorem}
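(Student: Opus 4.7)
\medskip

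\noindent Le plan consiste à globaliser le théorème de Drinfeld--Grinberg--Kazhdan (DGK) : en chaque $k$-point $\g(t)$, celui-ci fournit un isomorphisme de voisinages formels $(\clx)^{\wedge}_{\g(t)}\simeq (Y\times\ab^{\NN})^{\wedge}_{(y,0)}$, et il s'agit d'en produire une version en famille à travers un revêtement pro-lisse adéquat.

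\medskip

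\noindent \textbf{Étape 1 (mise en place).} Quitte à recouvrir $X$ par des ouverts affines et à choisir un plongement $X\hookrightarrow\ab^{N}$, on voit $\clx$ comme un sous-schéma localement fermé de $\cL\ab^{N}=\ab^{\NN}$ défini par les équations d'arc de $X$ et la condition $\leq d$ sur l'ordre de la singularité. La borne $d$ est essentielle : elle fournit le contrôle uniforme nécessaire pour mener le choix combinatoire de DGK en famille.

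\medskip

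\noindent \textbf{Étape 2 (paramétrisation des données DGK).} L'ingrédient central de DGK en un arc $\g$ est le choix d'un mineur du jacobien des équations d'arc non dégénéré à un ordre contrôlé par $d$; il découpe les coordonnées d'arc en \og liées\fg~(résolubles par itération de Newton en fonction des autres) et \og libres\fg~(destinées à former le facteur $\ab^{\NN}$). Je définirais $Z$ comme l'espace paramétrant au-dessus de $\clx$ un arc muni d'une donnée auxiliaire (un changement linéaire des coordonnées d'arc, ou une jauge convenable) qui rend ce choix uniforme sur un voisinage. Le morphisme d'oubli $z:Z\rightarrow \clx$ sera alors affine (les données auxiliaires parcourent un schéma affine sur $\clx$), pro-lisse (comme limite filtrante de schémas lisses à transitions affines), et surjectif (tout arc admet une donnée admissible après un tel choix).

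\medskip

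\noindent \textbf{Étape 3 et obstacle principal.} Une fois la donnée fixée sur $Z$, la machinerie DGK se déroule en famille : les coordonnées libres fournissent le facteur $\ab^{\NN}$ et les coordonnées liées sont déterminées par un système fini d'équations polynomiales, ce qui définit le schéma $T$, de type $(S)$ par construction. Cette séparation exacte n'est garantie qu'au niveau réduit : des termes croisés issus du développement de Taylor formel peuvent empêcher la séparation en présence de nilpotents, d'où l'énoncé (2) porté uniquement par $Z_{red}$. En chaque $k$-point de $Z$, la trace de $z$ sur le voisinage formel coïncide avec l'isomorphisme DGK, produit trivial avec celui sur les données auxiliaires, d'où (1). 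L'obstacle principal est précisément la \emph{non-canonicité} du choix de mineur jacobien le long de $\clx$, qui saute discontinûment selon une stratification fine et ne se laisse pas globaliser sans élargir la base ; c'est ce qui impose l'introduction du revêtement pro-lisse $Z$ plutôt que de travailler directement sur $\clx$. Un point délicat secondaire est l'identification précise du $T$ issu de la construction famille comme objet de la classe $(S)$ au sens de la section~\ref{(S)}, et la vérification que la structure produit se relève cohéremment à $Z_{red}$.
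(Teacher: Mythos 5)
Votre strat\'egie d'ensemble --- globaliser Drinfeld--Grinberg--Kazhdan en passant \`a un rev\^etement pro-lisse $Z\rightarrow\clx$ qui param\`etre les choix auxiliaires --- est la bonne, mais les deux id\'ees qui font effectivement marcher la preuve sont absentes ou mal identifi\'ees. D'abord, la donn\'ee auxiliaire port\'ee par $Z$ n'est ni un changement lin\'eaire de coordonn\'ees d'arc ni une \og jauge\fg~ r\'esolvant le choix du mineur jacobien : ce choix est \'evacu\'e en amont par la r\'eduction aux \emph{intersections compl\`etes sp\'eciales} via Denef--Loeser (Prop. \ref{dl}), c'est-\`a-dire par un recouvrement fini de $\clx$ par des morceaux sur chacun desquels un seul mineur $\psi=\det(\partial f_{i}/\partial y_{j})$ est non-d\'eg\'en\'er\'e. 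Ce que $Z$ param\`etre au-dessus d'un arc $x$, ce sont les \emph{donn\'ees de Weierstrass} : le polyn\^ome unitaire $q$ de degr\'e $\leq d$ avec $\psi(x)=qu$, $u$ une unit\'e (le morphisme $\al_{d}$), et les divisions $x=\bar{x}+tq\xi$ (le morphisme $\beta_{d}$). En cons\'equence, les assertions de (1) ne d\'ecoulent pas de l'\'enonc\'e ponctuel de DGK : l'isomorphisme formel provient du th\'eor\`eme de division de Weierstrass sur les \'epaississements artiniens ($\Inf_{k}$, Prop. \ref{w2}), et la pro-lissit\'e de $z$ ne se d\'eduit de la lissit\'e formelle en tout $k$-point que via le crit\`ere non trivial de placidit\'e (Prop. \ref{wei1}); votre appel \`a une \og limite filtrante de sch\'emas lisses\fg~ ne produit pas par lui-m\^eme une telle pr\'esentation.

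Ensuite, les deux points que vous signalez comme d\'elicats sont pr\'ecis\'ement ceux o\`u un argument concret est requis, et vos explications ne correspondent pas \`a ce qui se passe. La d\'ecomposition en produit \'echoue sur $Z$ lui-m\^eme non pas \`a cause de \og termes crois\'es\fg~ du d\'eveloppement de Taylor, mais parce que l'\'elimination des coordonn\'ees li\'ees par l'it\'eration de Newton (point fixe de Banach, Lemme \ref{suivant}) exige que la multiplication par la s\'erie non-d\'eg\'en\'er\'ee $\psi(x)$ soit injective; cela vaut sur les anneaux d\'ecents (Lemme \ref{dec1}) mais pas en g\'en\'eral, d'o\`u un \'enonc\'e seulement pour $Z_{dec}$ (a fortiori $Z_{red}$). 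Enfin, $T$ n'est pas de type $(S)$ \og par construction\fg~ en un sens automatique : il faut l'exhiber, via le diagramme explicite de la section \ref{lcifonda}, comme sch\'ema de pr\'esentation finie sur un produit fibr\'e des sch\'emas $S_{d}$ (fibres de $\beta_{d}$ au-dessus de la section nulle), ce qui est la d\'efinition m\^eme du type $(S)$. Telle quelle, la proposition d\'ecrit la forme du th\'eor\`eme plut\^ot qu'elle n'en donne une preuve.
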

Dans le cas analytique, l'existence de tels atlas avait été conjecturée par Koll\`{a}r-Nemethi \cite[Conj.73]{KN}.
La principale surprise est que l'on ne peut pas étendre la décomposition formelle de DGK en restant dans le monde des $k$-schémas de type fini, cela nécessite l'introduction d'une nouvelle classe de schémas, les schémas de type $(S)$. Ce sont des schémas non-noethériens, de dimension de Krull finie, qui admettent une stratification finie constructible par des $k$-schémas de type fini et dont tous les complétés des anneaux locaux aux points fermés sont noethériens. L'exemple de base est le schéma $S_1$ dont l'espace topologique s'identifie au constructible $\ab^{2}-\ab^1\cup\{0\}$.
L'autre ingrédient clé de cet énoncé de globalisation est l'utilisation de morphismes de type \og Weierstrass\fg~ qui sont pro-lisses et permettent après changement de base d'utiliser les théorèmes de division et de factorisation de type Weierstrass qui sont la clé de voûte de la décomposition formelle.
\medskip

Une fois que l'on a établi un énoncé géométrique, on s'intéresse à la théorie des faisceaux dessus, plus précisément, la partie affine $\ab^{\NN}$ étant facile à contrôler, le point clé est donc de montrer des énoncés de finitue pour les schémas de type $(S)$. 
Cet énoncé géométrique permet de définir une théorie des six foncteurs. 
On appelle $\cL$-schémas, tout $k$-schéma de présentation finie sur $\cL X^{\leq d}$ pour $d\in\NN$ et $X$ un $k$-schéma de type fini.
Notre deuxième énoncé est alors le suivant:
\begin{theorem}\label{thm2}
Soit $n\in\NN$ premier à la caractéristique de $k$, soit $f:S'\rightarrow S$ un morphisme séparé de présentation finie entre $\cL$-schémas, alors les foncteurs $(f_*,f^*,f_!,f^!,\RHom)$ préservent la catégorie des constructibles $D^b_{c}(S,\bZ/n)$.
\end{theorem}
On commence d'abord par obtenir la finitude pour des schémas de type $(S)$, cela s'obtient d'une part à l'aide d'un énoncé de dévissage des schémas de type $(S)$ (\ref{ouv}) et d'autre part à l'aide d'énoncés de changement de base pro-lisse et comparaison entre la cohomologie sur l'hensélisé et son complété dû à Gabber. On utilise ensuite les théorèmes de finitude de Gabber pour les schémas noethériens excellents, une fois que l'on s'est ramené à la complétion. Une fois ceci fait, ajouter la partie $\ab^{\NN}$ est aisé.
Le dernier énoncé est un énoncé de finitude que l'on peut voir comme une géométrisation de l'intégration motivique, ainsi que le considère Drinfeld dans \cite[sect. 6.8]{Dr}:
\begin{theorem}\label{thm3}
Soit $n\neq\car(k)$, $d\in\NN$, alors le complexe $R\Gamma(\cL X^{\leq d},\bZ/n)$ est constructible.
\end{theorem}
En vertu des triangles distingués relativement à une stratification et du théorèmes de finitude \ref{thm2}, il est suffisant d'établir cet énoncé le long d'une stratification constructible de $\cL X^{\leq d}$. On utilise alors le théorème \ref{fonda2} pour obtenir une stratification de $\cL X^{\leq d}$ par des schémas de la forme $H\times\ab^{\NN}$ où $H$ est un $k$-schéma de type fini et là on est ramené à l'énoncé de Drinfeld.

\subsection{Lien avec la théorie géométrique des représentations}
Ce travail tire sa source du souhait d'obtenir une bonne catégorie des faisceaux pervers sur les espaces d'arcs. Cette hypothétique catégorie de faisceaux pervers n'a cessé de faire son apparition comme fil directeur de nombreux travaux, d'abord en théorie de la représentation et en Langlands géométrique (\cite{FMI}, \cite{BFGM}, \cite{FGV}) puis plus récemment dans l'étude des fonctions $L$ (\cite{Sak}, \cite{Ngo}).
\medskip

En l'absence d'une telle catégorie, la méthode dont on disposait jusqu'alors était de remplacer ces objets de nature locale de dimension infinie, par des espaces de modules globaux sur lesquels on pouvait par exemple calculer des complexes d'intersections. On retrouve ce type de techniques, entre autres, dans les travaux de Finkelberg-Mirkovic \cite{FMI}, Braverman-Gaitsgory-Finkelberg-Mirkovic \cite{BFGM} et Bouthier-Ngô-Sakellaridis \cite{BNS}.
Pour s'assurer que ces espaces de modules globaux soient compatibles aux modèles locaux, on dispose du théorème DGK. La nature de la singularité en un arc $\g(t)$ est alors contenue dans la partie de dimension finie; si de plus on se place sur un corps fini, il est possible, à partir de ces morceaux de dimension finie de définir une \og fonction locale\fg~ et Bouthier-Ngô-Sakellaridis montrent que la fonction locale est compatible à la \og fonction globale\fg~ \cite[1.21-2.5]{BNS}. De plus, on s'attend à ce que cette \og fonction locale\fg~ provienne d'un certain faisceau pervers sur l'espace d'arcs. 

Cela a été notre motivation originale, mais à la lumière de l'énoncé de globalisation, une telle théorie ne semble pas exister (voir \ref{details} pour une discussion plus spécifique).
La principale raison pour cela est que bien que les voisinages formels soient les mêmes, lorsque l'on regarde les schémas affines correspondants, le voisinage formel épointé n'a pas les mêmes singularités que l'ouvert correspondant sur $\cL X^{\leq d}$. On peut donc espérer un énoncé du type \og le pullback du IC sur $\cL X^{\leq d}$ donne le IC (à décalage près) d'un voisinage formel de dimension finie.\fg.

\bigskip

Passons en revue l'organisation de l'article; dans la première section, on fait des rappels sur la décomposition de Weierstrass et on étudie la classe des morphismes pro-lisses et Weierstrass qui interviennent dans la preuve de \ref{thm1}.
Dans la deuxième section, on introduit les schémas de type (S) et on fait une étude géométrique de leur propriété. La troisième section montre ensuite l'énoncé de globalisation \ref{thm1} et enfin la dernière section établit les énoncés cohomologiques de finitude \ref{thm2} et \ref{thm3}.
\medskip

A la suite d'une série d'exposés au séminaire Drinfeld sur notre travail, B.C. Ngô a écrit des notes \cite{NWei} sur le théorème \ref{thm1}, en même temps qu'il nous a signalé une erreur dans la preuve de l'énoncé. Cette nouvelle version a grandement bénéficié des clarifications qu'il a faites à cette occasion, nous lui en sommes très reconnaissants.
\medskip

Ce travail a débuté en collaboration avec David Kazhdan, avec qui nous avons eu de nombreuses discussions, mais le projet initial a eu des modifications substantielles et pour le résultat final, Kazhdan a décliné d'être auteur. Toutefois, notre dette à son égard reste significative.

Nous tenons  à exprimer nos plus vifs remerciements à Peter Scholze pour sa relecture attentive du manuscrit ainsi que pour ses nombreux commentaires éclairés. Nous remercions également Thomas Bitoun, Ofer Gabber, Gérard Laumon pour les multiples discussions et questions que nous avons eu au sujet de ce travail. Enfin, nous remercions Bhargav Bhatt, Julien Sebag, David Bourqui ainsi que K\k{e}stutis \v{C}esnavi\v{c}ius pour l'amélioration de la forme. Nous tenons également à remercier l'Université Hébraïque de Jerusalem pour les très bonnes conditions de travail.

\section{Division de Weierstrass et morphismes pro-lisses}
\subsection{Le théorème de préparation de Weierstrass}\label{Artin}
Soit $A$ local complet, $\km$ son idéal maximal, de corps résiduel $k$.
On considère $f\in A[[t]]$ tel qu'en réduction $\bar{f}=t^nu(t)$ avec $u(t)\in k[[t]]^{\times}$.
Par le théorème de factorisation de Weierstrass (\cite[§3, n°9, Prop.6]{Bki}), il existe un unique polynôme unitaire $q\in A[t]$ de degré $n$ tel que:
\[
f=qv,
\]
avec $v\in A[[t]]^{\times}$.
De plus, d'après loc.cit., on a un énoncé de division de Weierstrass, pour $f\in A[[t]]$ comme ci-dessus et pour tout $g\in A[[t]]$, il existe un unique couple $(b,r)\in A[[t]]\times A[t]$ avec $\deg(r)\sinf n$, tel que:
\[g=bf+r.
\]
Soit $\Inf_{k}$ la catégorie des $k$-algèbres locales $A$, d'idéal maximal $\km$, de corps résiduel $k$, telles que $\km^m=0$ pour un certain $m\in\NN$. Les morphismes sont ceux de $k$-algèbres. Dans la suite, on appelle les objets de $\Inf_k$ des \textsl{anneaux quasi-artiniens}. En particulier, ces anneaux sont locaux complets, donc satisfont la division de Weierstrass.  Si $A$ est dans $\Inf_k$ et est noethérien, il est artinien.
\subsection{Rappels sur les espaces d'arcs}
Soit un corps $k$ algébriquement clos et un $k$-schéma de type fini $X$. 
Pour tout entier $j\in\NN$, on considère le foncteur $\cL_{j}X$ des arcs tronqués d'ordre $j$, dont les $R$-points, pour une $k$-algèbre $R$, sont les $\cL_{j}X(R)=X(R[t]/(t^{j+1})$. Il est représentable par un $k$-schéma de type fini et pour $j\geq i$ les flèches $\cL_{j}X\rightarrow\cL_{i}X$ sont affines. On considère alors l'espace d'arcs formels:
\[
\cL X:=\varprojlim\limits_{ j\in\NN}\cL_{j}X.
\]
En général, c'est un $k$-schéma non noethérien (à part si $\dim X= 0$, cf. \cite[Thm.2.5.5]{NS} pour un énoncé relatif plus général). D'après un théorème de Bhatt \cite[Thm.1.1]{Bh}, il vérifie la propriété universelle pour toute $k$-algèbre $R$ :
\[
\Hom(\Spec(R),\cL X)=\Hom(\Spec(R[[t]]),X).
\]
Si $X=\ab^{1}$, on a $\cL \ab^{1}=\ab^{\NN}=\Spec(k[x_{i}]_{i\in\NN})$. Pour tout $j\in\NN$, on dispose de flèches de projection :
\[
 f_{j}:\cL X\rightarrow\cL_{j}X.
\]
Si $X$ est lisse, ces flèches sont formellement lisses et surjectives. Dans le cas singulier, les flèches ne sont ni formellement lisses, ni plates, ni même  surjectives et l'étude des singularités de l'espace d'arcs s'avère délicate.
De plus, si on a un morphisme $f:X\rightarrow Y$, on a par fonctorialité un morphisme $\cL f:\cL X\rightarrow\cL Y.$
Si $f$ est étale alors $\cL f$ l'est également et on a un carré cartésien (\cite[sect.3]{ME}):
$$\xymatrix{\cL X\ar[r]\ar[d]&\cL Y\ar[d]\\X\ar[r]&Y}.$$
En particulier si on a un recouvrement de $X$ par des ouverts $U_{i}$, les ouverts $\cL U_{i}$ forment un recouvrement ouvert de $\cL X$.
Dans le cas lisse, la structure de l'espace d'arcs est aisée à étudier:
\begin{lem}\label{lissearc}
Soit $X$ un $k$-schéma lisse connexe de type fini avec $\dim X\geq 1$, alors localement pour la topologie Zariski, $\cll$ est un produit $U\times\ab^{\NN}$ avec $U$ un $k$-schéma lisse de type fini.
\end{lem}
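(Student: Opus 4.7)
Le plan est de se ramener, par \'etale-localisation, au cas de l'espace affine, pour lequel la d\'ecomposition se lit directement sur les coefficients des s\'eries formelles. Comme $X$ est lisse de dimension locale $d$ en tout point, le crit\`ere jacobien montre qu'au voisinage Zariski de tout $x\in X$, il existe un ouvert $U\subset X$ et un morphisme \'etale $\varphi: U\rightarrow \ab^{d}$ (obtenu en choisissant $d$ fonctions sur $U$ dont les diff\'erentielles forment une base du module des diff\'erentielles au point $x$, puis en restreignant $U$ pour que $\varphi$ soit partout \'etale). On travaillera d\'esormais sur un tel $U$.

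En vertu du carr\'e cart\'esien pour les morphismes \'etales rappel\'e ci-dessus, on a un isomorphisme canonique
\[
\cL U \simeq U\times_{\ab^{d}} \cL \ab^{d},
\]
o\`u la fl\`eche $\cL \ab^{d}\rightarrow \ab^{d}$ est la fl\`eche de troncation $f_{0}$ envoyant un arc sur son terme constant. Par ailleurs, en inspectant les points \`a valeurs dans une $k$-alg\`ebre $R$, un arc dans $\cL\ab^{d}$ est la donn\'ee de $d$ s\'eries formelles $\sum_{i\geq 0} a_{i,j} t^{i}$ pour $j=1,\ldots,d$. Les coefficients $(a_{0,j})_{j}$ d\'eterminent un point de $\ab^{d}$ tandis que les $(a_{i,j})_{i\geq 1,\,j}$, indic\'es par un ensemble d\'enombrable, fournissent un point de $\ab^{\NN}$. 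Ceci donne un isomorphisme $\cL \ab^{d}\simeq \ab^{d}\times\ab^{\NN}$ sous lequel $f_{0}$ s'identifie \`a la premi\`ere projection. En combinant, on obtient
\[
\cL U \simeq U\times_{\ab^{d}}(\ab^{d}\times\ab^{\NN}) \simeq U\times \ab^{\NN},
\]
ce qui est la d\'ecomposition voulue.

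Il n'y a pas d'obstacle technique r\'eel: les deux ingr\'edients cl\'es sont d\'ej\`a pr\'esents dans le texte, \`a savoir l'existence de mod\`eles \'etales locaux pour les sch\'emas lisses et la compatibilit\'e de la formation des espaces d'arcs aux morphismes \'etales. Le seul point \`a v\'erifier avec soin est l'identification de la fl\`eche de troncation $\cL\ab^{d}\rightarrow \ab^{d}$ avec la premi\`ere projection dans la d\'ecomposition $\ab^{d}\times\ab^{\NN}$, ce qui est imm\'ediat une fois les coordonn\'ees explicit\'ees.
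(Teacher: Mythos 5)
Votre preuve est correcte et suit essentiellement la même démarche que celle du texte : recouvrir $X$ par des ouverts affines étales sur l'espace affine, puis utiliser le carré cartésien pour les morphismes étales afin d'identifier $\cL U$ à $U\times_{\ab^{d}}\cL\ab^{d}\simeq U\times\ab^{\NN}$. Le seul détail supplémentaire que vous explicitez, l'identification de la flèche de troncation avec la première projection, est implicite dans la rédaction du texte et ne constitue pas une différence de méthode.
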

\begin{proof}
On commence par écrire $X=\bigcup U_{i}$ avec des ouverts affines $U_{i}$ étales sur $\ab^n$ avec $n\geq 1$. D'après ci-dessus, on a:
\[
\clui\cong U_i\times_{\ab^{n}}\cLp\ab^{n}\cong U_{i}\times\ab^{\NN}
\]
et les $\clui$ forment un recouvrement ouvert de $\cll$.
\end{proof}
Tout le but est de maintenant comprendre la situation singulière, bien plus complexe. On commence par un théorème sur la structure formelle de l'espace d'arcs. 
Dans la suite, on considère un corps $k$ parfait. Soit un $k$-schéma de type fini $X$ géométriquement réduit et pur. On considère le sous-schéma jacobien $\Jac_{X}$, défini par l'idéal de Fitting $\Fitt_{\dim X}(\Omega_{X})$. On renvoie à \cite{Eis} pour des rappels sur les idéaux de Fitting.
Le support du schéma jacobien est précisément le lieu singulier $X_{sing}$ de $X$ et c'est un fermé strict d'après \cite[Tag. 056V]{Pro}.
Si $X$ est affine, en le plongeant dans un espace affine $\ab^{N}$ défini par des équations $f_{1}=\dots=f_{m}=0$ dans $\ab^{N}$, alors $\Jac_{X}$ est engendré par les mineurs d'ordre $l$ de la matrice jacobienne $(\partial f_{i}/\partial x_{j})_{i,j}$ avec $l=N-m$. on considère l'ouvert des arcs non-dégénérés:
\[
\clbul:=\cL X-\cL X_{sing}.
\]
C'est un ouvert qui n'est pas quasi-compact non-vide comme $X_{sing}$ est un fermé strict.
On rappelle le théorème de Drinfeld \cite{Dr} et Grinberg-Kazhdan \cite{GK} :
\begin{theorem}\label{gk}
Soit un corps $k$ parfait, un $k$-schéma type fini $X$ géométriquement réduit. Soit $\g(t)\in\clbul(k)$, tel que $\g(0)$ ne soit pas dans une composante isolée, alors il existe un $k$-schéma de type fini $Y$ et  $y\in Y(k)$ tel que l'on a un isomorphisme de voisinages formels:
\[
\cll_{\g(t)}^{\wedge}\simeq (Y\times\ab^{\NN})^{\wedge}_{(y,0)}.
\]
\end{theorem}
\brems
\remi

Si $\g(0)$ est dans une composante isolée de $X$, alors $\g(t)$ est un point isolé de $\clbul$ et le voisinage formel s'identifie à $\Spec(k)$.
\remi
A la suite de \cite{BNS}, on appelle la paire $(Y,y)$ \textit{un modèle formel de dimension finie} en $\g(t)$. Il s'agit maintenant de voir comment l'on peut globaliser ce théorème.
\remi
Un modèle formel de type fini n'est en général ni irréductible, ni même réduit.
D'après Bourqui-Sebag \cite{BoS}, le théorème de Drinfeld-Grinberg-Kazhdan ne s'étend pas aux arcs dégénérés $\g\in\cll_{sing}\subset\cll$.
\erems
Ce modèle formel de type fini peut se calculer de manière très explicite, l'ingrédient clé est le théorème de préparation de Weierstrass.
Voyons comment on utilise ce théorème dans un exemple précis tiré de \cite{Dr}. 

\bex
Soit $X$ l'hypersurface $yx_{n+1}+f(x_1,\dots,x_n)=0$ et l'arc $\g_{0}(t)$ et $x^0_{n+1}(t)=t$, $y=x_1^{0}(t)=\dots=x_n^{0}(t)=0$.
Alors, le modèle formel $Y$ est donné par l'hypersurface $f(x_1,\dots,x_n)=0$ et $y=0$. En effet, par le théorème de division de Weierstrass, pour tout anneau $A\in\Inf_k$, une $A$-déformation de $x^0_{n+1}(t)=t$ est de la forme $x_{n+1}(t)=(t-\al)u(t)$ avec $\al\in\km$ et $u\in A[[t]]$ inversible. Si l'on se fixe $\al$, $u$, $x_1(t),\dots,x_n(t)\in\km[[t]]$, il y a au plus un $y(t)\in\km[[t]]$ tel que $y(t)x_{n+1}(t)+f(x_1(t),\dots,x_n(t))=0$ et $y(t)$ existe si et seulement si $f(x_1(\al),\dots,x_n(\al))=0$.
\eex
On veut obtenir un énoncé global, qui ne fait pas intervenir les complétions formelles. L'inconvénient est que l'on ne dispose pas de théorème de division de Weierstrass pour des anneaux locaux henséliens, cela nous amène donc à étudier une nouvelle classe de morphismes.
\subsection{Morphismes pro-lisses}\label{Dpro}
Soit $f:Z\rightarrow S$ un morphisme de schémas. On dit que:
\benumr
	\item 
	$f$ est \textsl{pro-lisse} (resp. \textsl{pro-étale)} s'il s'écrit comme une limite projective filtrante de morphismes lisses (resp. étales) avec des morphismes de transition affines.
	\item
	$f$ est \textsl{placide} s'il s'écrit comme une limite projective filtrante $Z\simeq\varprojlim\limits Z_{i}$ où les $Z_{i}$ sont de présentation finie sur $S$ et où les morphismes de transition sont affines, lisses.
\eenum
\brems\label{tfpro}
\remi 
	Il est crucial de remarquer que pour un morphisme pro-lisse, les morphismes de transition sont arbitraires. En revanche, pour un morphisme pro-étale, les morphismes de transition sont automatiquement étales (\cite[Tag. 02GW]{Pro}).
	\remi
	Un morphisme $\Spec(B)\rightarrow\Spec(A)$ pro-lisse de type fini est lisse. En effet, dans ce cas $B$ admet un nombre fini de générateurs et en écrivant $B\simeq\colim B_{\al}$ avec $B_{\al}$ des $A$-algèbres lisses, on obtient que $B\simeq B_{\al}$ pour $\al$ assez grand.
	\remi
	Comme une limite inductive filtrante de modules plats est plate, on en déduit qu'un morphisme pro-lisse entre schémas $f:Z\rightarrow S$ est plat et comme d'après \cite[II.1.2.3.4]{Ill}, le complexe cotangent commute aux limites inductives filtrantes d'algèbres, on en déduit que le complexe cotangent $L_{Z/S}$ est concentré en degré zéro et plat sur $Z$.
		\erems
Les morphismes pro-lisses et placides sont stables par changement de base arbitraire et par composition. En effet, traitons le cas d'un morphisme pro-lisse, les autres cas sont analogues.
On considère alors $Z\rightarrow Y\rightarrow X$ une composée de morphismes pro-lisses.
On a donc $Z\simeq\varprojlim Z_{i}$ avec $Z_{i}$ lisse de présentation finie sur $Y$. Maintenant, comme $Y$ est pro-lisse sur $X$, on a $Y\simeq\varprojlim Y_{j}$ avec les $Y_{j}$ lisses de présentation finie sur $X$ et il résulte alors \cite[Tag. 0C0C, 01ZL]{Pro} que chaque $Z_{i}$ se descend en un schéma lisse de présentation finie sur un des $Y_{j}$, le résultat suit.
\medskip

\brem La réciproque est malheureusement fausse d'après un contre-exemple de Gabber, rédigé par Bhatt \cite{Gab1}. De plus, l'exemple rédigé par Bhatt pourrait amener à croire qu'il s'agit d'une pathologie typique de la caractéristique $p$. Néanmoins, Gabber nous a communiqué que le résultat reste faux en caractéristique zéro. En remplaçant la perfection de l'algèbre $B_{i}$ de \cite{Gab1} par l'algèbre où l'on ajoute les racines carrés, on obtient à nouveau une algèbre non-réduite dont le complexe cotangent est plat et placé en degré zéro.
\erem
On s'intéresse dans la section suivante aux morphismes pro-lisses entre schémas noethériens.
\subsection{Morphismes réguliers}
On rappelle qu'un morphisme \textsl{régulier} est un morphisme plat à fibres géométriquement régulières et que si $A$ est un anneau excellent, une des propriétés est que pour tout $\kp\in\Spec(A)$, le morphisme de complétion $A_{\kp}\rightarrow \hat{A}_{\kp}$ est régulier.
De plus,  si $A$ est excellent, il en est de même de toute $A$-algèbre de type fini. Les exemples typiques d'anneaux excellents qui apparaîtront ici sont donnés par les $k$-algèbres de type fini et les anneaux locaux complets noethériens. On renvoie à \cite[Tag. 07QS]{Pro} pour plus de détails. 
\medskip

D'après le théorème de Popescu (\cite{P1},\cite{P2},\cite{P3}), tout
morphisme régulier $f:\Spec(B)\rightarrow\Spec(A)$ entre  schémas noethériens est pro-lisse. En particulier, si $Y$ est un schéma excellent et $y\in Y$, le morphisme de complétion:
	\[
	\Spec(\cO_{Y,y}^{\wedge})\rightarrow\Spec(\cO_{Y,y})
	\]
	est pro-lisse. On remarque ainsi qu'un morphisme pro-lisse n'est pas nécessairement formellement lisse.
A l'aide du théorème de Popescu, on peut clarifier la relation entre régulier et pro-lisse dans le cas noethérien:
\begin{theorem}\label{pop}
Soit un morphisme  $f:\tilde{Y}\rightarrow Y$ entre schémas affines noethériens, on a les équivalences suivantes:
\benumr
	\item 
	$f$ est régulier.
	\item
	$f$ est pro-lisse.
	\item
	$f$ est plat et le complexe cotangent $L_{\tilde{Y}/Y}$ est concentré en degré zéro et plat sur $\tilde{Y}$.
\eenum
\end{theorem}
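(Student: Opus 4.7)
Les implications $(1)\Rightarrow(2)$ et $(2)\Rightarrow(3)$ sont d�j� acquises : la premi�re est exactement le th�or�me de Popescu \ref{popescu} et la seconde est la proposition \ref{cotan}. Le contenu non-trivial de l'�nonc� est donc l'implication $(3)\Rightarrow(1)$, que l'on se propose d'aborder fibre � fibre sur $Y$.

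Fixons $y\in Y$ et consid�rons la fibre $\tilde Y_{y}:=\tilde Y\times_{Y}\Spec\kappa(y)$. Comme $f$ est plat par hypoth�se, la formule de changement de base pour le complexe cotangent donne
\begin{center}
$L_{\tilde Y_{y}/\kappa(y)}\simeq L_{\tilde Y/Y}\otimes^{L}_{\cO_{\tilde Y}}\cO_{\tilde Y_{y}}$,
\end{center}
et l'hypoth�se sur $L_{\tilde Y/Y}$ se transmet imm�diatement : le membre de droite est encore concentr� en degr� z�ro et plat sur $\cO_{\tilde Y_{y}}$. On se ram�ne ainsi � l'�nonc� absolu suivant : \emph{si $R$ est une $k$-alg�bre noeth�rienne telle que $L_{R/k}$ soit concentr� en degr� z�ro et plat sur $R$, alors $R$ est g�om�triquement r�guli�re sur $k$.}

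Pour clore cette r�duction, on invoque la caract�risation d'Andr�-Quillen des morphismes r�guliers entre anneaux noeth�riens : la r�gularit� de $A\to B$ �quivaut � l'annulation de l'homologie d'Andr�-Quillen $H_{i}(A,B,M)$ pour tout $i\geq 1$ et tout $B$-module $M$ (cas local d� � Quillen, raffin� par Avramov). Comme $H_{i}(A,B,M)=H_{i}(L_{B/A}\otimes^{L}_{B}M)$, la platitude de $L_{B/A}$ en degr� z�ro fournit formellement cette annulation, et appliqu�e aux fibres de $f$ donne la r�gularit� g�om�trique recherch�e.

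L'�tape d�licate est pr�cis�ment cette derni�re invocation : dans le cadre noeth�rien mais non essentiellement de type fini, l'implication \og $L_{B/A}$ plat et concentr� en degr� z�ro $\Rightarrow$ $A\to B$ r�gulier \fg~ n'est pas formelle et repose sur la th�orie d'Andr�-Quillen combin�e au crit�re de lissit� formelle pour la topologie $\km$-adique d'EGA IV, 19.7 sur les anneaux locaux noeth�riens. Les autres ingr�dients -- changement de base du complexe cotangent, r�duction fibre � fibre, th�or�me de Popescu -- sont directs � partir des r�sultats d�j� cit�s.
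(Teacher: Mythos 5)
Votre d\'emonstration suit essentiellement la m\^eme voie que celle du texte : $(1)\Rightarrow(2)$ est le th\'eor\`eme de Popescu \ref{popescu}, $(2)\Rightarrow(3)$ la proposition \ref{cotan}, et pour $(3)\Rightarrow(1)$ vous aboutissez, apr\`es une r\'eduction fibre \`a fibre correcte mais superflue (le th\'eor\`eme s'applique directement au morphisme plat noeth\'erien $f$), \`a invoquer le th\'eor\`eme d'Andr\'e, qui est exactement la r\'ef\'erence \cite[Th.30, p. 331]{Andre} donn\'ee par le texte. L'\'etape \og d\'elicate \fg~ que vous identifiez honn\^etement \`a la fin est pr\'ecis\'ement tout le contenu de cette citation, de sorte que votre argument co\"incide en substance avec celui du papier.
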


\begin{proof}
$(i)\Rightarrow (ii)$ est l'objet du théorème de Popescu, d'après \ref{tfpro}, on a l'implication $(ii)\Rightarrow (iii)$ et $(iii)\Rightarrow (i)$ se déduit d'un théorème d'André \cite[Th.30, p. 331]{Andre}.
\end{proof}
\subsection{Morphismes de Weierstrass}
Soit $k$ algébriquement clos, $f:Z\rightarrow S$ un morphisme entre $k$-schémas, soit $x\in Z(k)$ et $y=f(x)$.
Soit $\hat{f}_{x}:\Def_{Z,x}\rightarrow\Def_{Y,y}$ le morphisme au niveau des foncteurs de déformation, on dit que $\hat{f}_{x}$ est \textsl{topologiquement de type fini} s'il existe un entier $r\in\NN$ et un diagramme commutatif:
$$\xymatrix{\Def_{Z,x}\ar[dr]\ar[r]&\Def_{\ab^{r}_{Y},y}\ar[d]\\&\Def_{Y,y}}$$
où la flèche horizontale est une immersion fermée (i.e représentable par une immersion fermée après tiré-en-arrière par tout $\Spec(R)$ avec $R$ quasi-artinien) et la flèche verticale de droite est induite par la projection.
En utilisant le fait que les foncteurs de déformation sont pro-représentables par les anneaux locaux complétés, cela se réinterprète en disant qu'on a une surjection:
\[\cO_{Y,y}^{\wedge}[[t_1,\dots,t_{r}]]\twoheadrightarrow \cO_{Z,z}^{\wedge}.\]

\begin{defi}\label{DWei}
Soit $k$ algébriquement clos, $f:Z\rightarrow S$ un morphisme entre $k$-schémas. On dit que :
\benumr
	\item 
	$f$ est  \textsl{Weierstrass}, si $f$ est pro-lisse et pour tout $k$-point $x\in Z(k)$, $\hat{f}_{x}$ est formellement lisse et topologiquement de type fini.
	\item
	$f$ est un \textsl{isomorphisme formel} si pour tout $k$-point $x\in Z(k)$,	$\hat{f}_{x}$ est un isomorphisme.
	\eenum
\end{defi}
\brem\label{rem-def}
Comme  les foncteurs de déformation commutent aux produits et que la formelle lissité est stable par changement de base et composition, les morphismes de Weierstrass et les isomorphismes formels sont stables par changement de base arbitraire et composition ainsi que par produit direct.
\erem

\begin{lem}\label{D-Top}
On conserve les notations de la définition précédente, alors si $\hat{f}_{x}$ est topologiquement de type fini et formellement lisse, on a un isomorphisme (non-canonique) de foncteurs:
\[\Def_{\ab^{r}_{Y,y}}\simeq\Def_{Z,x}\]
pour un certain entier $r\in\NN$.
\end{lem}
\brem\label{rem-top}
Comme les anneaux locaux complétés pro-représentent les foncteurs de déformations, il résulte du lemme que la condition de \ref{DWei}.1. est équivalente à ce que le morphisme:
\[\hat{\eta}_{x}:\cO^{\wedge}_{Y,y}\rightarrow\cO^{\wedge}_{Z,x}\]
soit formellement lisse et induise un isomorphisme $\cO^{\wedge}_{Y,y}[[t_1,...,t_{r}]]\simeq\cO_{Z,x}^{\wedge}$.
\erem
\begin{proof}
Notons $A=\cO_{Y,y}$, $B=\cO_{Z,x}$, $\km_{A}$, $\km_{B}$ leurs idéaux maximaux respectifs et $\wA$ (resp. $\wB$) la complétion $\km_{A}$-adique (resp. $\km_{B}$-adique).
Pour tout $n\in\NN$, on définit alors $\widehat{\km_{A}^{n}}=\Ker(\wA\rightarrow A/\km_{A}^n)$ et de même $\widehat{\km_{B}^{n}}$.
En général, on a $\widehat{\km_{A}^{n}}\neq\km_{A}^{n}\wA$ et $\wA$ n'est pas $\km_{A}$-adiquement complet, il est donc plus naturel de considérer sur $\wA$ la topologie induite par la filtration $F^{\bullet}_{A}=(\widehat{\km_{A}^{n}})$, pour laquelle $\wA$ est tautologiquement complet. En particulier, on a:
\begin{equation}
\forall~n\in\NN, \wA/\widehat{\km_{A}^{n}}\cong A/\km_{A}^{n}, \widehat{\km_{A}^{n}}/\widehat{\km_{A}^{n+1}}\cong \km_{A}^{n}/\km_{A}^{n+1}.
\label{fil}
\end{equation}
et pareillement pour $\wB$. Comme la flèche $\hat{f}_{x}$ est topologiquement de type fini, on a une surjection d'anneaux locaux:
\[\widehat{A}[[t_1,\dots,t_r]]\rightarrow \wB.\]
On peut donc choisir une base \textsl{finie} $x_1,\dots, x_s$ de $\bt^{*}_{\wB/\wA}=\widehat{\km_{B}}/(\widehat{\km_{B}^{2}}+\widehat{\km_{A}}B)\cong\km_{B}/(\km_{B}^{2}+\km_{A}B)$ d'après \eqref{fil} ($\widehat{\km_{B}}/\widehat{\km^{2}_{B}}$ s'envoie surjectivement sur $\bt^{*}_{\wB/\wA}$). On pose  $T:=A[X_1,\dots X_{s}]$, soit $\km_{T}$ sont idéal maximal et $\wT=\wA[[X_1,\dots X_{s}]]$. On a un morphisme de $A$-algèbres locales:
\[\wB\rightarrow T/(\km_{T}^{2}+\km_{A}T)\]
On utilise alors la formelle lissité pour relever d'abord à $T/\km_{T}^2$ puis en un morphisme
\[u:\wB\rightarrow \wT\]
qui induit un isomorphisme entre $\bt_{\wB/\wA}^{*}$ et $\bt_{\wT/\wA}^{*}$. En particulier, une chasse au daigramme analogue à \cite[Lem. 1.1]{Sch} donne que la flèche sur les gradués $\gr(u)$ est surjective (pour les filtrations $F_{B}^{\bullet}$ et $F_{T}^{\bullet}$), donc $u$ est surjectif d'après \cite[Ch. III, n°2, Cor.2]{Bkifil}.
De plus, si on choisit $y_i\in \wB$ tel que $u(y_i)=X_i$, on peut poser $v(X_i)=y_i$ et construire une flèche de $\wA$-algèbres locales $v:\wT\rightarrow \wB$ telle que $u\circ v=\Id_{\wT}$. Ainsi, $v$ est injective et induit un isomorphisme sur les cotangents, donc $v$ est aussi surjective, donc c'est un isomorphisme.
\end{proof}

\begin{prop}\label{wei1}
Soit un morphisme $f:Z\rightarrow S$  entre $k$-schémas $(\aleph_0)$-placides affines avec des morphismes de transition surjectifs, on suppose qu'en tout $k$-point $x\in Z(k)$, $\hat{f}_{x}$ est formellement lisse, alors $f$ est pro-lisse. 
Si de plus, pour tout $x\in Z(k)$, $\hat{f}_{x}$ est topologiquement de type fini, il est donc de Weierstrass.
\end{prop}
\brem Par $(\aleph_0)$-placide, on entend que le système projectif qui intervient est dénombrable.
\erem
\begin{proof}
On écrit $Z=\Spec(A)$ avec $A=\varinjlim\limits A_{i}$ et $S=\Spec(B)$ avec $B=\varinjlim B_{j}$ où les $A_{i}, B_{j}$ sont des $k$-algèbres de type fini et les morphismes de transition sont lisses surjectifs. Fixons $j$, alors le morphisme $Z\rightarrow S_{j}:=\Spec(B_{j})$ se factorise par un $Z_{i}:=\Spec(A_{i})$.
On a alors un diagramme commutatif:
$$\xymatrix{Z\ar[d]_{p_{i}}\ar[r]^{f}&S\ar[d]^{q_{j}}\\Z_{i}\ar[r]& S_{j}}$$
où les flèches verticales sont formellement lisses et surjectives, comme les systèmes inductifs sont dénombrables.
Soit $x\in Z(k)$, on pose $y=f(x)$, $x_{i}:=p_{i}(x)$ et $y_{j}=q_{j}(x)$, alors on a un diagramme au niveau des complétés formels:
$$\xymatrix{\Def_{Z,x}\ar[d]\ar[r]^{\hat{f}_{x}}&\Def_{S,y}\ar[d]\\\Def_{Z_{i},x_{i}}\ar[r]& \Def_{S_{j},y_{j}}}$$
où cette fois $\hat{f}_{x}$ est également formellement lisse et les verticales sont toujours formellement lisses et surjectives.
On déduit donc que la flèche :
\[
\Def_{Z_{i},x_{i}}\rightarrow \Def_{S_{j},y_{j}}
\]
est formellement lisse et comme $Z_{i}$ et $S_{j}$ sont des $k$-schémas de type fini, d'après \cite[Tag. 02HY]{Pro}, la flèche
\[
f_{ij}:Z_{i}\rightarrow S_{j}
\]
est lisse en $x_{i}$. Comme par hypothèse $Z\rightarrow Z_{i}$ est surjective, $f_{ij}$ est lisse en tout $k$-point de $Z_{i}$  et comme les points fermés sont denses dans $Z_i$ et le lieu lisse ouvert, $f_{ij}$ est lisse.
On obtient alors que $f$ est pro-lisse comme il s'écrit comme limite projective filtrante des morphismes $Z_{i}\times_{S_{j}}S\rightarrow S.$
\end{proof}
\brem A partir de maintenant, tous les systèmes projectifs qui apparaîtront seront dénombrables, sauf mention explicite.
\erem

La terminologie de morphisme de Weierstrass se justifie par ce qui suit.
Pour tout entier $d\geq 1$, on considère le $k$-schéma $\cQ_{d}$ qui classifie les polynômes unitaires de degré $d$, l'ouvert $\cQ_{d}^{\flat}\subset\cQ_d$ des polynômes de $\cQ_d$ dont le terme constant est inversible et  le $k$-schéma $A_{d}$ qui classifie les polynômes de degré au plus $d-1$. On définit l'ouvert:
\[
 \cL\ab^{\leq d}:=\ev_{d}^{-1}(\cL_{d}\ab^{1}-\{0\}),
\]
avec $\ev_{d}:\cL\ab^{1}\rightarrow\cL_{d}\ab^{1}$ .
On considère alors le morphisme:
\begin{equation}
\al_{d}:\cQ_{d}\times\cL\bG_{m}\rightarrow\cL\ab^{\leq d}
\label{aldef}
\end{equation}
donné par $(q,u)\mapsto qu$.

\begin{prop}\label{alw}
Le morphisme $\al_{d}$ est  $\aleph_0$-Weierstrass, affine, surjectif et induit un isomorphisme formel en les $k$-points de la forme $(t^d,u)$.
\end{prop}
\begin{proof}
Comme la source est affine et le but est un ouvert d'un schéma affine, $\al_d$ est affine d'après \cite[Tag. 01SG]{Pro}.
Les schémas sont clairement $\aleph_0$-placides et affines et la flèche est surjective sur les $K$-points, pour tout corps $K$, donc surjective.
D'après la  proposition \ref{wei1}, il faut donc  démontrer que $\al_{d}$ est formellement lisse et topologiquement de type fini sur les complétés formels des  $k$-points.
Il suffit d'étudier le foncteur des déformations, soit $A\in\Inf_{k}$, d'idéal maximal $\km_{A}$ et de corps résiduel $k$.
Soit $\g_{0}=t^{d}u\in k[[t]]$, $u\in k[[t]]^{\times}$ et $\g\in A[[t]]$ qui se réduit  modulo $\km_{A}$ sur $\g_{0}$.
Par le théorème de division de Weierstrass, on a alors:
\[
\g(t)=qv,
\]
avec $v\in A[[t]]^{\times}$ et $q\in \cQ_{d}(A)$. On obtient ainsi l'isomorphisme au niveau des complétés formels en $\g_{0}$.
Il s'agit de voir l'assertion en les autres points.

Soit $y\in\cL\ab^{\leq d}(A)$ de réduction $y_{0}=t^{e}v_{0}$ avec $v_{0}\in k[[t]]^{\times}$ et $e\leq d$. Montrons que la fibre du morphisme $\al_d(A)$ est donnée par $\cQ^{\flat}_{d-e}(A)$. 
On commence par appliquer le théorème de Weierstrass à $y$ et on écrit $y=pv$ avec $p\in\cQ_{e}(A)$ et $v\in A[[t]]^{\times}$. La fibre au-dessus de $y$ est alors donnée par les paires $(q,u)\in(\cQ_{d}\times\cL\bG_{m})(A)$ telles que:
\[
q=pvu^{-1}.
\]
Comme on a de plus, $A[t]/(p)\cong A[[t]]/(p)$, puisque les deux $A$-modules sont libres de même rang, on en déduit que
$p\vert q$ et $q=pp'$, avec $p'\in\cQ_{d-e}(A)$. De plus, en réduisant modulo $t$ l'égalité $qu=pv$, on obtient que $p'\in A[[t]]^{\times}$,
on obtient alors que $u$ est déterminé uniquement par:
\[
u=v(p')^{-1},
\]
ce qu'on voulait.
\end{proof}
On introduit la stratification naturelle de $\cQ_{d}$. Pour $0\leq e\leq d$, on considère le fermé $\cQ_{d,e}\subset\cQ_{d}$ défini par:
\[
\cQ_{d,e}:=\{q\in\cQ_{d}~\vert~ t^{e}\vert q\}.
\]
En particulier, on a $\cQ_{d,0}=\cQ_{d}$ et une suite décroissante de fermés:
\[
\cQ_{d,0}=\cQ_{d}\supset\cQ_{d,1}\supset\dots\supset\cQ_{d,d}=\{t^d\}.
\]
La multiplication par $t^{e}$ induit un isomorphisme:
\begin{equation}
t^{e}:\cQ_{d-e}\cong\cQ_{d,e}.
\label{eq0}
\end{equation}
 Pour $0\leq e\leq d$, on considère les ouverts:
\[
\cQ^{\flat}_{d,e}:=\{q\in\cQ_{d,e}~\vert~ a_{e}\in\bG_{m}\}\cong\bG_{m}\times\ab^{d-1-e}.
\]
avec la convention $\cQ^{\flat}_{d,d}=\cQ_{d,d}$ et $\cQ_{d}^{\flat}:=\cQ_{d,0}^{\flat}$.
Le morphisme $\al_d$ admet une description simple sur chacune des strates où l'on fixe la valuation:
\begin{prop}\label{alreg}
Pour tout $0\leq e\leq d$, on a un isomorphisme canonique:
\begin{equation}
\cQ^{\flat}_{d-e}\times\cL\ab^{=e}\simeq\al_{d}^{-1}(\cL\ab^{=e}).
\label{str}
\end{equation}
de telle sorte que la composée $\cQ^{\flat}_{d-e}\times\cL\ab^{=e}\ra \cL\ab^{=e}$ s'identifie à la projection suivant le second facteur.
En particulier, le morphisme est régulier.
\end{prop}
\begin{proof}
Le localement fermé  $\cL\ab^{=e}$ consiste en les séries de la forme $t^{e}v$ avec $v\in\cL\bG_m$.
On considère la flèche canonique:
\[
\cQ^{\flat}_{d-e}\times\cL\ab^{=e}\ra\al_{d}^{-1}(\cL\ab^{=e}).
\]
 définie par $(p, v)\mapsto (t^{e}p,p^{-1}v)$ où $p\in\clp \bG_m$ comme $p(0)\in\bG_m$. Montrons que c'est un isomorphisme.
Soit $(q,u)\in \al_{d}^{-1}(\cL\ab^{=e})$, on a alors :
\[
qu=t^e v,
\]
 pour un certain $v\in\cL\bG_m$, d'où $t^{e}vu^{-1}=q$, d'où l'on déduit en identifiant les coefficients que $t^e\vert q$ et donc on a:
\[
q=t^e p
\]
avec $p\in\cQ^{\flat}_{d-e}$, comme souhaité.
D'après la \Cref{alw}, on a déjà que le morphisme est pro-lisse, donc plat et on vient de voir que les fibres de $\al_d$ sont données par des schémas affines  lisses, donc le morphisme est régulier.
\end{proof}
Le morphisme $\al_{d}:Z\rightarrow S$ avec $Z:=\cQ_{d}\times\cL\bG_m$ et $S:=\cL\ab^{\leq d}$ induit un morphisme au niveau des espaces tangents:
\[
d\al_d:TZ\rightarrow TS\times_{S}Z.
\]
avec $TZ:=\cQ_{d}\times\cL\bG_m\times A_{d}\times\cL\ab^1$ et $TS\times_{S}Z=\cQ_{d}\times\cL\bG_m\times\cL\ab^1$. A un quadruplet $(q,u,a,v)\in TZ$, on associe:
\[
d\al_d(q,u,a,v)=(q,u,ua+qv).
\]
En prenant le paramètre $u=1$, on obtient ainsi le morphisme:
\begin{equation}
\beta_{d}:\cQ_{d}\times A_{d}\times\cL\ab^{1}\rightarrow \cQ_{d}\times\cL\ab^{1},
\label{beta}
\end{equation}
donné par $(q,v,\xi)\mapsto (q,v+q\xi)$.

\begin{prop}\label{betw}
Le morphisme $\beta_{d}$ est  Weierstrass, affine, surjectif et induit un isomorphisme formel aux $k$-points de la forme $(t^d,v,\xi)$.
\end{prop}
\begin{proof}
On a déjà que $\beta_{d}$ est affine, comme morphisme entre schémas affines.
D'après la \Cref{alw}, $\al_{d}$ est Weierstrass, surjectif donc d'après \ref{wei1} et critère infinitésimal, $d\al_{d}$ l'est également et $\beta_{d}$ par changement de base.
\end{proof}

\begin{prop}\label{betreg}
Le long de la stratification $\{\cQ^{\flat}_{d,e}\times\cL\ab^{1}\}_{0\leq e\leq d}$, on a un isomorphisme canonique:
\[
\cQ^{\flat}_{d,e}\times A_{d-e}\times\cL\ab^{1}\simeq\beta_{d}^{-1}(\cQ^{\flat}_{d,e}\times\cL\ab^{1}).
\]
\end{prop}
\begin{proof}
A nouveau, la proposition se déduit de \ref{alreg} par différentiation et changement de base ainsi que du fait que la stratification de $\cL\ab^{\leq d}$ par les $\cL\ab^{=e}$ induit la stratification correspondante sur $\cQ_{d}\times\cL\ab^{1}$.
\end{proof}

\section{Briques non-noethériennes}
Dans cette section, on considère une certaine classe d'anneaux qui vont apparaître de manière cruciale dans le \Cref{fonda}.
\subsection{Schémas décents}\label{schemas-dec}
Soit un schéma  $S$, on considère le faisceau d'idéaux $\nil_{\infty}(\cO_S)$ dont les sections sur un ouvert sont données par :
\[
\nil_{\infty}(\cO_{S}(U)):=\bigcap\limits_{n\geq 0}\bigcap\limits_{t\in U}\Ker(\cO_{S}(U)\rightarrow \cO_{U,t}/\km_{t}^{n}).
\]
On note $i_{dec}:S_{dec}\hookrightarrow S$, le sous-schéma fermé défini par ce faisceau d'idéaux et on dit que $S$ est \textsl{décent }si $i_{dec}$ est un isomorphisme. Un anneau $A$ est \textsl{décent} si $\Spec(A)$ l'est.  
Le schéma $S_{dec}$ vérifie la même propriété d'unicité que le réduit (analogue à \cite[Tag. 01J3]{Pro}), à savoir qu'il est l'unique schéma décent avec le même espace topologique que $S$.
De plus, par réduction au cas affine, on obtient que pour tout morphisme de schémas $W\rightarrow S$ avec $W$ décent, la flèche se factorise par $S_{dec}$.

Si $S=\Spec(A)$, alors $x\in\nil_{\infty}(A)$ si pour tout $\kp\in\Spec(A)$, $x$ est dans le noyau de $A\rightarrow A_{\kp}/\kp^{n}A_{\kp}$ pour tout $n$. On en déduit ainsi:
\[
x\notin\nil_{\infty}(A)\Longleftrightarrow \exists~ \nu:A\rightarrow R~~ \text{tel que} ~~\nu(x)\neq 0~~ \text{et}~~ R\in\Inf_{k}.
\] 
Des exemples d'anneaux décents sont fournis par les familles suivantes:
\bitem
\m
	Si $A$ est réduit, car $\nil(A)=\bigcap\limits_{\kp\in\Spec(A)}\kp=\{0\}$.
\m
	Si $A$ est noethérien, en effet d'après \cite[Tag. 00L9]{Pro} il s'injecte  dans le produit $\prod\limits_{\kp\in\Ass(A)}A_{\kp}$ où $\Ass(A)$ désigne l'ensemble, fini car $A$ est noethérien, des idéaux premiers minimaux et les anneaux semi-locaux noethériens sont décents par le théorème d'intersection de Krull (\cite[Tag. 00IP]{Pro}).
\m
	Tout anneau local complet est décent.
	\eitem
	Un exemple typique d'anneau indécent est de considérer $B:=A/tA$ où $A:=\bigcup\limits_{n\geq 0} A[[t^{\frac{1}{n}}]]$. L'idéal maximal  de $B$ vérifie $\km=\km^2$ et $\km\subset\nil_{\infty}(B)$.
	\medskip

Une immersion fermée $i:S_{0}\hookrightarrow S$ est dite \textsl{décente} si le faisceau d'idéaux $\cI$ qui la définit est contenu dans $\nil_{\infty}(\cO_{S})$. Les immersions décentes sont stables par changement de base. De plus, comme les foncteurs $S_{dec}(R)$ et $S(R)$ sont les mêmes pour tout anneau $R$ décent et donc en particulier pour les anneaux de $\Inf_{k}$, on en déduit que toute immersion décente est un isomorphisme formel.
\begin{lem}\label{limnil}
Soit $S$ un schéma quasi-compact quasi-séparé, on considère une immersion décente $S'\hookrightarrow S$,
alors elle s'écrit comme une limite filtrante (non-nécessairement dénombrable) d'immersions décentes de présentation finie.
\end{lem}
\begin{proof}
D'après \cite[Tag. 09ZP]{Pro}, toute immersion fermée dans un schéma qcqs est une limite d'immersions fermées de présentation finie et les flèches de transition sont elle-mêmes des immersions fermées (voir preuve de loc.cit.).
Ainsi, on a:
\[
 S'\simeq\varprojlim S_{\al}
\]
Comme de plus, $S'\hookrightarrow S$ est décente, il en est de même des $S_{\al}$.
\end{proof}

\subsection{Les schémas $S_d$}\label{sd}
Soit $d\in\NN^*$, on considère le carré cartésien:
\begin{equation}
\xymatrix{S_{d}\ar[d]_{\psi_{d}}\ar[r]&\cQ_{d}\times A_{d}\times\cL\ab^{1}\ar[d]^{\beta_{d}}\\\cQ_{d}\ar[r]&\cQ_{d}\times\cL\ab^{1}}
\label{defSd}
\end{equation}
où la flèche horizontale du bas est donnée par $q\mapsto(q,0)$.
D'après \ref{betw}, le schéma $S_{d}$ est pro-lisse sur $\cQ_{d}$ et est non-noethérien car $\beta_{d}$ n'est pas de type fini.
Il admet une section:
\begin{equation}
\sigma:\cQ_{d}\rightarrow S_{d}
\label{sigma}
\end{equation}
donné par $q\mapsto (q,0,0)$.
Décrivons-le de manière explicite pour $d=1$.
Le schéma $\cQ_{1}\times A_{1}\times\cL\ab^{1}$ classifie les triplets $(q,v,\xi)$ avec $q=t+a$ le polynôme universel de degré un et $\cQ_{1}:=\Spec(k[a])$. 
On écrit $\cL\ab^{1}:=\Spec(k[\xi_{0},\xi_{1}\dots])$. Les équations pour $S_{1}$ sont alors données par:
\begin{align}
    v+a\xi_{0}& = 0& \\
   \xi_{0}+a\xi_{1}& =0 & \\
	\xi_{1}+a\xi_{2}& = 0& \\
	\dots
\end{align}
avec $\xi=(\xi)_{i\in\NN}$.
En particulier, la fibre au-dessus de $S_{1}$ pour $a\neq0$ est isomorphe à $\ab^{1}$ et la fibre au-dessus de 0 est réduite à un point.
Il résulte de ces équations que pour tout $n\in\NN$, $a^{n}$ divise tous les $\xi_{i}$ et $v$.
En particulier, on trouve que la complétion formelle en 0 du schéma $S_{1}$ est isomorphe à $k[[a]]$.

Plus généralement, on vérifie immédiatement que la fibre en zéro $\psi_{d}^{-1}(0)$  se réduit à 0 et il résulte de \ref{betw}, par changement de base, que la flèche $\psi_{d}:S_{d}\rightarrow\cQ_{d}$ induit un isomorphisme sur les complétions en zéro.
 \medskip

Etudions les schémas $S_{d}$ le long d'une stratification.
\begin{lem}\label{stratD}
Soit $d\in\NN^*$, alors $S_{d}$ admet une stratification finie constructible par $d+1$ schémas de type fini.
Plus précisément, pour tout $e\in\llbracket0,d\rrbracket$, $\psi_{d}^{-1}(\cQ_{d,e}^{\flat})$ est un $k$-schéma de type fini.
\end{lem}
On rappelle que $\cQ_{d,e}^{\flat}:=\{q\in\cQ_{d}\vert, t^{e}\vert q, a_{e}\in\bG_{m}\}$ avec la convention que $\cQ_{d,d}^{\flat}=\{t^d\}$.
\begin{proof}
Cela se déduit par changement de base de \ref{betreg}.
\end{proof}

\subsection{Schémas de type $(S)$}\label{(S)} 
Soit $d\in\NN$ et un $k$-schéma qcqs $T$, on dit qu'il est \textsl{de type} $(S_d)$ s'il admet une partition :
\[T=\coprod\limits_{i=0}^{d} T_{i}\]
par des sous-schémas localement fermés non-vides, de telle sorte que:
\benumr
	\item 
	Pour tout $i\leq d$, $\bigcup\limits_{k\leq i} T_{k}$ est ouvert quasi-compact dans $T$.
\item
Pour tout $i$, les $T_i$ sont des $k$-schémas de type fini.
\end{enumerate}
On dit qu'il est \textsl{de type} $(S)$ s'il est de classe $(S_{d})$ pour un certain $d\in\NN$. De plus, on appelle $(S)$-\textsl{stratification}, une stratification qui vérifie les deux conditions ci-dessus.
Dans ce cas, d'après \cite[Tag. 05GG]{Pro}, pour tout $i$, l'immersion $T_{i}\hookrightarrow T$ est de type fini.

Si $d=0$, les schémas de type $(S_0)$ sont simplement les $k$-schémas de type fini.
Si $T$ est de type $(S_d)$ pour un certain $d\in\NN^*$, il admet un fermé $F\hookrightarrow T$ qui est un $k$-schéma de type fini de complémentaire $V$ quasi-compact de type $(S_{d-1})$:
\begin{equation}
T=V\sqcup F.
\label{ndecomp}
\end{equation}

Soit $A$ un anneau muni de la topologie $I$-adique pour un idéal de type fini $I\subset A$ et un $A$-module. On définit $M^{\wedge}:=\varprojlim M/I^nM$ la complétion $I$-adique de $M$. 
Il résulte de \cite[00M9]{Pro} que l'on a:
\begin{equation}
I^nA^{\wedge}\cong(I^{n})^{\wedge}~~ \text{et}~~ A^{\wedge}/I^nA^{\wedge}\cong A/I^{n}.
\label{ad1}
\end{equation}
En particulier, $A^{\wedge}$ est $I$-adiquement complet.
\begin{lem}\label{dim-comp}
On considère une décomposition telle que dans \eqref{ndecomp}, on a les propriétés suivantes:
\benumr
	\item 
Les schémas de type $(S)$ ont un espace topologique noethérien et sont de dimension de Krull finie.
\item
Tout sous-schéma localement fermé d'un schéma de type $(S)$ est de type $(S)$.
\item
On conserve les notations de \eqref{ndecomp}. L'immersion fermée $F\hookrightarrow T$ est de type fini et si $T$ est affine, alors le schéma affine sous-jacent à la complétion formelle de $T$ le long de $F$ est noethérien excellent.
\eenum
\end{lem}
\bpf
(i) Comme un espace topologique qui admet une stratification finie par des espaces topologiques noethériens de dimension finie est lui-même de noethérien de dimension finie (\cite[Tag. 0053]{Pro}), on en déduit l'assertion pour les schémas de type $(S)$.

(ii) Soit  un localement fermé $H$ d'un schéma $T$ de type $(S)$ et une (S)-stratification $(T_i)$ de $T$. D'après (i), $T$ est noethérien, donc $H$ est aussi noethérien, donc quasi-compact. On considère alors la stratification induite $H\cap T_i$ avec sa structure réduite. La condition (i) est bien vérifiée comme $H$ est quasi-compact. Il s'agit donc de voir que $H\cap T_i$ est un $k$-schéma de type fini. Or, $H\cap T_i\hookrightarrow T_i$ est localement fermé dans un $k$-schéma de type fini, donc lui-même de type fini.

(iii) Comme $F$ est un $k$-schéma de type fini, l'immersion fermée $F\hookrightarrow T=\Spec(A)$ \cite[Tag. 01T8]{Pro} est aussi de type fini, donc l'idéal $I$ qui la définit est de type fini et $A/I$ est noethérien, donc d'après  \cite[Tag. 05GH]{Pro}, la complétion $I$-adique de $A$, $A^{\wedge}$, est un anneau noethérien $I$-adiquement complet. 
\epf
\begin{prop}\label{comp2}
Soit $T$ un schéma de type $(S)$, $y\in T(k)$, alors $\cO_{T,y}^{\wedge}$ est noethérien excellent.
\end{prop}
\bpf
On procède par récurrence sur l'entier $d\in\NN$ tel que $T$ est de type $(S_d)$. 
Si $d=0$, $T$ est un $k$-schéma de type fini et cela se déduit de \cite[Tag. 07QW]{Pro}.
Supposons le résultat vrai pour les schémas de type $(S_d)$ et montrons le résultat pour $(S_{d+1})$.
Soit $T$ de type  $(S_{d+1})$, on écrit alors une décomposition telle que dans \eqref{ndecomp}:
\[T=V\sqcup F\]
où $F$ est un $k$-schéma de type fini, fermé dans $T$ de complémentaire  $V$ de type $(S_d)$.
Si $y\in V(k)$, $\cO_{T,y}^{\wedge}=\cO_{V,y}^{\wedge}$ et c'est clair par hypothèse de récurrence.
Si $y\in F(k)$, alors comme $F$ est un $k$-schéma de type fini, la composée $\Spec(k(y))\hookrightarrow F\hookrightarrow T$ est aussi de type fini d'après \ref{dim-comp}, donc d'après \cite[Tag. 05GH]{Pro}, $\cO_{T,y}^{\wedge}$ est local noethérien complet, donc excellent (\cite[Tag. 07QW]{Pro}).
\epf
\brem
En revanche, en général pour un point quelconque d'un schéma de type $(S)$, la complétion de l'anneau local n'est pas noethérienne en général; il suffit de considérer l'immersion fermée $S_1\hookrightarrow S_2$.
\erem
Pour tout $d\in\NN$, la catégorie des schémas de type $(S_d)$ est stable par produit direct et par morphismes de type fini, ainsi qu'on le voit en prenant respectivement le produit direct et l'image inverse d'une $(S)$-stratification.$\\$
Plus généralement, si $T$ est un schéma de type $(S)$ et $f:T'\rightarrow T$ un morphisme \textsl{$(S)$-stratifié}, i.e qcqs et il existe une $(S)$-stratification $(T_{i})$ de $T$ telle que pour tout $i$, $f_{\vert f^{-1}(T_i)}:f^{-1}(T_i)\rightarrow T_i$ est de type fini, alors $T'$ est aussi de type $(S)$.$\\$
Enfin, pour tout morphisme $(S)$-stratifié $f:X\rightarrow T$ et $g: Y\rightarrow T$ entre schémas de type $(S)$, quitte à raffiner la $(S)$-stratitication pour qu'elle soit adaptée à $f$ et à $g$, on obtient que le produit fibré $X\times_{T}Y$ est aussi de type $(S)$.

Pour tout entier $m\in\NN$, on considère le produit fibré de $m$ copies de $S_d$:
\[
S_{d}^{(m)}:=S_{d}\times_{\cQ_{d}}\dots\times_{\cQ_{d}}S_{d}.
\]
\begin{prop}\label{ouv}
Tout schéma de type fini sur un produit $S_{d_1}^{(m_1)}\times_{k}\dots\times_{k} S_{d_r}^{(m_r)}$ est de type $(S)$.
\end{prop}
\begin{proof}
La catégorie des schémas de type $(S)$ est stable par morphismes de type fini et produits directs. On est donc ramené à montrer l'assertion pour $S_{d}^{(m)}$ et cela se déduit alors de \ref{stratD} et de la stabilité par produit fibré de morphismes $(S)$-stratifiés.
\end{proof}
En fait, dans le théorème \ref{fonda}, ce sont précisément ces schémas là qui apparaissent (et même ceux de présentation finie sur un produit $S_{d_1}^{(m_1)}\times_{k}\dots\times_{k} S_{d_r}^{(m_r)}$).
\subsection{Théorie de la dimension}

\blem\label{jacob}
Tout schéma de type $(S)$ est de Jacobson.
\elem
\bpf
D'après \cite[10.1.2]{EGA}, il s'agit de voir que tout localement fermé de type $(S)$ admet un point fermé. Or d'après \ref{dim-comp}, un localement fermé $H$ d'un schéma de type $(S)$ est aussi de type $(S)$, donc en particulier, contient un ouvert $U$ qui est un $k$-schéma de type fini, donc de Jacobson d'après \cite[10.4.7]{EGA}. En particulier, $U$ admet un point fermé, qui est aussi un point fermé de $H$, ce qu'on voulait.
\epf

\bprop\label{dimouv}
Soit $T$ un schéma de type $(S)$ irréductible de point générique $\eta$, alors $\dim T=\degtr_{k}(k(\eta))$, en particulier, pour tout ouvert $U$, on a $\dim U=\dim T$.
\eprop
\bpf
Quitte à recouvrir $T$, on peut supposer $T=\Spec(A)$ affine. 
Soit $U=D(f)$ un ouvert affine non-vide de $T$ qui est un $k$-schéma de type fini, on a donc $\degtr_{k}(k(\eta))=\dim(U)\leq \dim(T)$. Montrons la réciproque; comme $T$ est intègre, $U$ est dense. On écrit  $A\simeq\colim A_{\al}$ comme réunion de ses sous-$k$-algèbres de type fini, qui sont aussi intègres et $T\simeq\varprojlim T_\al$. Il existe $\al$ tel que $f\in A_{\al}$, de telle sorte que $A_{f}\simeq\colim\limits_{\beta\geq\al} A_{\beta,f}$.
Comme $A_f$ est une $k$-algèbre de type fini, quitte à grandir $\al$, on peut supposer que $A_{f}\cong A_{\beta,f}$ pour tout $\beta\geq\al$.
Comme les algèbres considérés sont intègres, on obtient la chaîne d'égalités pour $\beta\geq\al$:
\[\dim (U)=\dim (U_{\beta})=\dim (T_{\beta})=\sup\limits_{\beta\geq\al}\dim (T_\beta).\]
Il nous suffit de montrer que $\dim(T)\leq \sup\limits_{\beta\geq\al}\dim (T_\beta)$.
Soit alors une chaîne d'idéaux premiers $\kp_0\subsetneq\dots\subsetneq\kp_{n}$ de longueur maximale de $T$, on considère pour tout $i$, $s_{i}\in\kq_i\backslash\kq_{i-1}$, soit $\beta'$ tel que $s_0,\dots,s_n\in A_{\beta'}$. Pour tout $\beta\geq\beta'$, on a donc par intersection, une chaîne d'idéaux premiers:
\[\kp_0\cap A_{\beta}\subsetneq\dots\subsetneq\kp_{n}\cap A_{\beta}\]
et donc on en déduit que $\dim(T)\leq \sup\limits_{\beta\geq\al}\dim (T_\beta)$, ce qu'on voulait.
\epf

\bcor\label{jaff}
Soit $T$ de type $(S)$, alors c'est un schéma de Jaffard, i.e. pour tout $n\in\NN$, on a $\dim (T\times_k\ab^n)=\dim (T)+n$.
\ecor
\brem Etant donné que l'on considère des schémas non-noethériens, une telle propriété n'est pas automatique, il existe des anneaux $A$ de dimension un pour lesquels $\dim(A[t])=3$.
\erem
\bpf
Comme $T\times_{k}\ab^n$ est de type $(S)$, par récurrence on peut supposer $n=1$ et comme $T$ est noethérien, on peut supposer $T$ intègre et donc $T\times\ab^1$ aussi.
Soit $U$ un ouvert non-vide de $T$ qui est un $k$-schéma de type fini.
D'après la proposition précédente et comme $U$ est de type fini, on a $\dim(T)=\dim(U)$ et $\dim(T\times_{k}\ab^n)=\dim(U\times_{k}\ab^n)=n+\dim(U)=n+\dim(T)$, ce qu'on voulait.
\epf

\section{Construction d'un atlas}
\subsection{Énoncés principaux et dévissages}
On considère un $k$-schéma de type fini $X$ réduit, pur avec $\dim X\geq 1$. 
On dispose de l'ouvert $\clbul:=\cll-\cll_{sing}$, comme c'est un schéma qui n'est pas quasi-compact, on va considérer des ouverts plus petits.
Pour tout entier $d\in\mathbb{N}$, on peut alors considérer l'ouvert 
\[\clx:=\ev_{d}^{-1}(\clxD-\clxD_{sing})
\]
avec $\ev_{d}:\cll\rightarrow\clxD$. C'est un schéma quasi-compact quasi-séparé, comme image inverse d'un schéma de type fini par un morphisme affine.
\begin{theorem}\label{fonda}
Soit $d\in\NN$, alors  il existe un morphisme de schémas $f:Z\rightarrow\clx$ avec $Z$ tel que:
\benumr
	\item 
	$f$ est Weierstrass, affine et surjectif.
	\item
	On a une immersion décente $Z\hookrightarrow T\times\ab^{\NN}$ avec $T$ un $k$-schéma de type (S).
\eenum
Si de plus, $X$ est irréductible, $T$ l'est aussi. On appelle un tel $Z$ un atlas formel.
\end{theorem}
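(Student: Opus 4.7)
Le plan est de construire $Z$ localement par pr�paration de Weierstrass, en appliquant $\al_{d}$ et $\beta_{d}$ aux coordonn�es d'un plongement ferm� $X\hookrightarrow\ab^{N}$, puis de conclure par un recouvrement ouvert fini de $\clx$.

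D'abord, je me ram�ne au cas $X$ affine par un recouvrement Zariski (par compatibilit� des espaces d'arcs avec les immersions ouvertes), et fixe un plongement $X\hookrightarrow\ab^{N}$ avec $X$ d�fini par $f_{1}=\dots=f_{m}=0$, $l=N-m=\dim X$. Par d�finition de $\clx=\ev_{d}^{-1}(\clxD\setminus\clxD_{sing})$, un $k$-point $\gamma$ de $\clx$ v�rifie qu'il existe un mineur maximal $M$ de la matrice jacobienne $(\partial f_{i}/\partial x_{j})$ tel que $M(\gamma)$ a valuation $t$-adique au plus $d$. Les mineurs maximaux �tant en nombre fini, cela fournit un recouvrement ouvert fini $\clx=\bigcup_{M} U_{M}$ par les lieux $U_{M}:=\{\gamma~\vert~\val_{t}M(\gamma)\leq d\}$.

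Ensuite, pour chaque mineur $M$ avec coordonn�es distingu�es $x_{j_{1}},\dots,x_{j_{m}}$, je construis $Z_{M}\to U_{M}$ par le carr� cart�sien
$$\xymatrix{Z_{M}\ar[r]\ar[d]&\cQ_{d}\times\cL\gm\times(A_{d}\times\cL\ab^{1})^{N}\ar[d]^{\phi_{d}}\\ U_{M}\ar[r]&\cL\ab^{\leq d}\times\cL\ab^{N}}$$
o� $\phi_{d}(q,u,(v_{j},\xi_{j})_{j}):=(qu,(v_{j}+q\xi_{j})_{j})$ combine $\al_{d}$ et $N$ copies de $\beta_{d}$ partageant le facteur $\cQ_{d}$, et la fl�che inf�rieure est $\gamma\mapsto(M(\gamma),\gamma)$. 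Par \ref{alw} et \ref{betw}, $\phi_{d}$ est Weierstrass, affine et surjective, propri�t�s stables par changement de base, d'o� (1). Un $R$-point de $Z_{M}$ est un uplet $(q,u,(v_{j},\xi_{j})_{j})$ avec $x_{j}(\gamma)=v_{j}+q\xi_{j}$, $M(\gamma)=qu$, et $f_{i}(v+q\xi)=0$ dans $R[[t]]$. La d�composition $R[[t]]=A_{d}(R)\oplus qR[[t]]$ (via division euclidienne par $q$) scinde chaque �quation $f_{i}=0$ en une \emph{partie finie} modulo $q$, ne faisant intervenir que $(q,v_{j})$ et d�finissant un sch�ma de type (S) $T_{M}$ suivant la construction $S_{d}(X,f)$, et une \emph{partie infinie} dans $qR[[t]]$ de la forme $J(v)\xi+\cdots=0$.

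L'inversibilit� formelle de la sous-matrice jacobienne associ�e � $M$ (garantie par $M(v+q\xi)=qu$ avec $u\in\cL\gm$) permet, via la division de Weierstrass g�n�ralis�e \ref{w2} appliqu�e dans les �paississements, d'exprimer les $m$ s�ries $\xi_{j_{k}}$ en fonction des $l=N-m$ s�ries restantes, qui param�trent librement un facteur $\ab^{\NN}$. Cette r�solution introduit des �l�ments de $\nil_{\infty}$ (du fait du comportement non-r�duit des fibres de $\al_{d}$ et $\beta_{d}$) qui s'�liminent apr�s passage � $(\cdot)_{dec}$, d'o� $Z_{M,dec}\simeq T_{M}\times\ab^{\NN}$. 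On pose enfin $Z:=\bigsqcup_{M} Z_{M}$, qcqs comme r�union disjointe finie, fournissant l'atlas cherch�. L'obstacle principal sera cette r�solution des $\xi_{j_{k}}$ : prouver rigoureusement que l'inversion formelle s'�tend globalement apr�s r�duction d�cente requiert un argument d'�limination it�r�e de type Newton contr�l� par \ref{w2}, et v�rifier que la structure produit avec $\ab^{\NN}$ se pr�serve sous $(\cdot)_{dec}$.
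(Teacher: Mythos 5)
Votre sch\'ema g\'en\'eral (pr\'eparation de Weierstrass sur un mineur jacobien via $\al_{d}$ et $\beta_{d}$, s\'eparation en une partie de dimension finie et une queue libre, recollement sur un recouvrement fini) suit bien la strat\'egie du texte, mais l'\'etape que vous signalez vous-m\^eme comme \og l'obstacle principal \fg~ est pr\'ecis\'ement le coeur de la preuve, et la voie que vous proposez pour la franchir ne fonctionne pas telle quelle. Vous voulez \og exprimer les $m$ s\'eries $\xi_{j_{k}}$ en fonction des $l$ restantes \fg~ en invoquant l'inversibilit\'e formelle de la sous-matrice jacobienne $\phi$. Or $\phi$ n'est pas inversible sur $A[[t]]$ : son d\'eterminant a une valuation $t$-adique strictement positive (seulement born\'ee par $d$), donc le syst\`eme lin\'eaire en $\xi$ ne se r\'esout pas par inversion, et la division de Weierstrass \ref{w2} ne s'applique qu'aux anneaux de $\Inf_{k}$, pas \`a un voisinage. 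Le texte contourne ce point en deux temps : (i) la caract\'erisation de l'image de $\phi$ sur les anneaux d\'ecents par la matrice adjointe (proposition \ref{dec2} : $u\in\Ima\phi$ si et seulement si $\phi'u=0 \bmod \psi$), qui remplace la r\'esolution en $\xi$ par la condition de congruence $\phi'(x,f(x))=0 \bmod t\psi(x)^{2}$ d\'efinissant un sch\'ema interm\'ediaire $N_{1}$ sans variables de queue ; (ii) un argument de point fixe de Banach (lemme \ref{suivant}) pour absorber les termes d'ordre sup\'erieur du d\'eveloppement de Taylor, ce qui exige la substitution $x+t\psi(x)\nu$ sur les seules coordonn\'ees $y$ (le facteur $t\psi(x)$ rend les termes quadratiques divisibles par $t^{2}\psi^{2}$), et non la substitution $v+q\xi$ que vous utilisez. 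Ce n'est qu'ensuite que $(\al_{d},\beta_{d})$ est appliqu\'e \`a toutes les coordonn\'ees de $N_{1}$ : les queues $\xi$ deviennent libres parce que les \'equations de $N_{1}$ sont des congruences modulo $t\psi^{2}$, donc ne portent que sur les troncatures, et non parce qu'on aurait r\'esolu en $\xi$. C'est aussi cette congruence qui identifie la partie finie \`a un produit fibr\'e avec $S_{d+1}\times_{\cQ_{d+1}}\dots\times_{\cQ_{d+1}}S_{d+1}$, c'est-\`a-dire \`a un sch\'ema de type (S) ; votre \og partie finie modulo $q$ \fg~ seule ne donne pas cette structure.

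Une seconde lacune, plus mineure : vous supposez $l=N-m=\dim X$, c'est-\`a-dire que $X$ est une intersection compl\`ete, ce qui n'est pas le cas d'un $X$ affine g\'en\'eral ; pour un $X$ quelconque le lieu singulier n'est pas le lieu d'annulation d'un unique d\'eterminant et toute la m\'ecanique pr\'ec\'edente (adjointe, $\psi=\det\phi$) tombe. Le texte se ram\`ene au cas d'intersection compl\`ete sp\'eciale via Denef--Loeser et Mustata--Ein (proposition \ref{dl}), en r\'ealisant $\clx$ localement comme sous-sch\'ema localement ferm\'e constructible de pr\'esentation finie d'un $\cL M_{i}^{\leq d}$ pour des intersections compl\`etes $M_{i}$ contenant $X$, puis en tirant l'atlas en arri\`ere (proposition \ref{findec}). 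Votre recouvrement par les ouverts $U_{M}$ index\'es par les mineurs est en revanche correct et correspond \`a ce que fait le texte une fois la r\'eduction effectu\'ee.
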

\brem
 On rappelle que les immersions décentes sont des isomorphismes formels.
\erem
On commence par l'énoncé suivant:
\begin{prop}\label{findec}
Soit $S'\rightarrow\clx$ de présentation finie, alors si $\clx$ admet un atlas formel, il en induit un par changement de base sur $S'$.	
\end{prop}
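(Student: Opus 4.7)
Le plan est de vérifier les deux conditions définissant un atlas formel pour le morphisme $f':Z':=Z\times_{\clx}S'\rightarrow S'$ obtenu par changement de base. Pour la condition (i), j'utiliserais simplement que les propriétés Weierstrass, affine et surjectif sont stables par changement de base arbitraire (voir la remarque suivant la définition des morphismes de Weierstrass), sans travail additionnel.

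Pour la condition (ii), le plan est de calculer explicitement $W:=Z_{dec}\times_{\clx}S'$ et de l'identifier avec $(Z')_{dec}$. Travaillant localement, on écrit $\clx=\Spec(A)$, $Z_{dec}=\Spec(C[x_{1},x_{2},\ldots])$ avec $T=\Spec(C)$ de type (S), et $S'=\Spec(R)$ avec $R=A[y_{1},\ldots,y_{m}]/(f_{1},\ldots,f_{k})$ grâce à la présentation finie. L'observation clé est que chaque $f_{i}$ n'a qu'un nombre fini de coefficients dans $A$ dont les images dans $C[x_{1},x_{2},\ldots]$ ne font intervenir qu'un nombre fini $x_{1},\ldots,x_{N}$ de variables. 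On en déduit l'isomorphisme
\begin{center}
$C[x_{1},x_{2},\ldots]\otimes_{A} R\simeq\bigl(C[x_{1},\ldots,x_{N}]\otimes_{A} R\bigr)[x_{N+1},x_{N+2},\ldots]$,
\end{center}
soit $W\simeq T'\times\ab^{\NN}$ où $T':=(T\times\ab^{N})\times_{\clx}S'$ est de présentation finie sur $T\times\ab^{N}$, lui-même de présentation finie sur un produit $S_{d_{1}}\times_{\cQ_{d_{1}}}\cdots\times_{\cQ_{d_{k}}}S_{d_{k}}$; par conséquent $T'$ est de type (S).

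Pour identifier $W$ avec $(Z')_{dec}$, je montrerais d'abord que $W\hookrightarrow Z'$ est une immersion décente: son idéal provient par extension de $\cI\subset\nil_{\infty}(\cO_{Z})$ définissant $Z_{dec}\subset Z$, et la fonctorialité des complétés formels en les $k$-points le long de $Z'\to Z$ assure qu'il reste contenu dans $\nil_{\infty}(\cO_{Z'})$; ceci donne $(Z')_{dec}\hookrightarrow W$. Ensuite, $W\simeq T'\times\ab^{\NN}$ est lui-même décent, car les schémas de type (S) le sont (lemme \ref{s1} combiné au théorème d'intersection de Krull appliqué aux complétés formels noethériens en les $k$-points), et l'ajout de variables polynomiales préserve la décence; par le lemme \ref{decente}, l'immersion $W\hookrightarrow Z'$ se factorise par $(Z')_{dec}$, d'où l'égalité.

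La difficulté principale sera la vérification rigoureuse de la décence des schémas de type (S), point qui n'est pas formellement énoncé dans ce qui précède mais devrait découler du lemme \ref{s1}; il faudra aussi s'assurer du recollement global de l'argument local précédent, ce qui ne pose pas d'obstacle majeur puisque la construction du produit $T'\times\ab^{\NN}$ ne dépend que de la structure locale des anneaux en jeu.
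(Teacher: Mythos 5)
Votre d\'emonstration suit pour l'essentiel la m\^eme voie que celle du texte : stabilit\'e par changement de base (Weierstrass, affine, surjectif) pour la condition (i), puis identification de $Z_{dec}\times_{\clx}S'$ avec $(Z')_{dec}$ en combinant la stabilit\'e des immersions d\'ecentes par changement de base, la d\'ecence du terme de gauche et la descente noeth\'erienne pour retrouver la structure produit $T'\times\ab^{\NN}$. La seule divergence r\'eelle porte sur la justification de la d\'ecence de $Z_{dec}\times_{\clx}S'$ : le texte l'obtient en affirmant qu'un sch\'ema de pr\'esentation finie sur un sch\'ema d\'ecent est d\'ecent, tandis que vous la ramenez \`a la d\'ecence des sch\'emas de type (S), que vous signalez \`a juste titre comme le point d\'elicat --- notez d'ailleurs que l'affirmation du texte, prise \`a la lettre, est contredite par son propre exemple ($B=\bigcup_{n}k[[t^{1/n}]]$ est r\'eduit donc d\'ecent, mais $B/tB$, de pr\'esentation finie sur $B$, est ind\'ecent), de sorte qu'un argument plus pr\'ecis, exploitant la structure particuli\`ere de $Z_{dec}$, est de toute fa\c{c}on n\'ecessaire \`a cet endroit des deux c\^ot\'es.
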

\begin{proof}
Soit $Z\rightarrow \clx$ un atlas formel comme dans le \Cref{fonda}. 
Soit $Z_{1}$  le changement de base de $Z$ à $S'$. Par changement de base, $Z_{1}$ est affine, surjectif et  Weierstrass sur $S'$.
De plus, $Z_1$ est de présentation finie sur $Z$.
On a une immersion décente:
\[\iota:Z\hookrightarrow T\times\ab^{\NN}
\]
avec $T$ de type $(S)$. Comme $T\times\ab^{\NN}$ est quasi-compact quasi-séparé, d'après le \Cref{limnil} et \cite[Tag. 01ZL]{Pro}, il existe une immersion décente de présentation finie $\iota_{\al}:Z_{\al}\rightarrow T\times\ab^{\NN}$ telle que l'on a un carré cartésien:
$$\xymatrix{Z_{1}\ar[d]\ar[r]&Z_{1,\al}\ar[d]\\Z\ar[r]& Z_{\al}}$$
où les flèches horizontales sont des immersions décentes.
En particulier, $Z_{1,\al}$ est de présentation finie sur $T\times\ab^{\NN}$, donc à nouveau par descente noethérienne, on a:
\[Z_{1,\al}\simeq T'\times\ab^{\NN},
\]
où $T'$ est de présentation finie sur $T$, donc il est de type $(S)$, ce qui conclut.
\end{proof}
On dit qu'un $k$-schéma est un schéma \textsl{d'intersection complète spécial}, s'il est défini comme un sous-schéma fermé de $\Spec(k[x_{1},\dots,x_{m},y_{1},\dots, y_{l}])$ par les équations $p_{1}=\dots=p_{l}=0$.
Dans ce cas, le lieu singulier consiste en le lieu d'annulation du déterminant de la matrice jacobienne $(\frac{\partial p_{i}}{\partial y_{j}})$.

\begin{prop}\label{redu}
Le \Cref{fonda} pour les intersections complètes spéciales implique le cas général.
\end{prop}

On a besoin de la proposition suivante qui résulte de \cite{DL} et \cite[Prop.4.1, Lem. 4.2]{ME} :
\begin{prop}\label{dl}
Supposons de plus $X$ affine.
Soient $r, d$ des entiers avec $r\geq d$, alors il existe des schémas intersections complètes spéciaux $M_{i}$ et un recouvrement par des ouverts affines $U_{i}$ de $\clrx^{\leq d}$ qui vérifient les conditions suivantes:
\benumr
\item
On a des immersions fermées $X\hookrightarrow M_{i}$.
\item
Les ouverts $U_{i}$ sont contenus dans $\clrmi^{\leq d}$.
\item
Soit $V_{i}$ l'image réciproque de $U_{i}$ dans $\cll$, alors on a un carré cartésien:
$$\xymatrix{V_{i}\ar[d]\ar[r]&\clmi^{\leq d}\ar[d]\\U_{i}\ar[r]&\clrmi^{\leq d}}$$
\eenum
\end{prop}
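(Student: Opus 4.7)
The plan is to deduce the general case from the special-complete-intersection case in two moves: first a Zariski reduction to $X$ affine, then a local reduction along the cover of Proposition \ref{dl}, pulling back the hypothetical atlas of each $\cL M_i^{\leq d}$ via Proposition \ref{findec}.

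\medskip

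First I would reduce to the case where $X$ is affine. Since $X$ is of finite type over $k$, choose a finite affine open cover $X=\bigcup_j X_j$. Because $\cL$ commutes with open immersions (the cartesian square recalled just before Lemma \ref{lissearc}), the opens $\cL X_j^{\leq d}$ cover $\cL X^{\leq d}$. If the statement is known for each affine $X_j$, providing atlases $Z^{(j)}\to \cL X_j^{\leq d}$, then $\coprod_j Z^{(j)}\to \cL X^{\leq d}$ inherits all required properties, since open immersions are \'etale (hence Weierstrass) and affine, and the properties (Weierstrass, affine, surjective, isomorphism formel) are stable under composition and finite coproducts.

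\medskip

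Now assume $X$ affine and fix some $r\geq d$. Proposition \ref{dl} produces a finite affine open cover $\{U_i\}$ of $\cL_r X^{\leq d}$, special complete intersections $M_i$ with closed immersions $X\hookrightarrow M_i$, and cartesian squares identifying the preimage $V_i$ of $U_i$ in $\cL X$ with $U_i\times_{\cL_r M_i^{\leq d}} \cL M_i^{\leq d}$. Since $\cL X^{\leq d}$ is the preimage of $\cL_r X^{\leq d}$ under $\cL X\to \cL_r X$, the $V_i$ form a finite open cover of $\cL X^{\leq d}$; each $V_i$ is affine because it is affine over the affine $U_i$ (base change of $\cL M_i\to \cL_r M_i$). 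By the assumed case of the theorem, each $M_i$ admits a formal atlas $W^{(i)}\to \cL M_i^{\leq d}$ satisfying (1) and (2). The map $V_i\to \cL M_i^{\leq d}$ is the base change of the locally closed immersion $U_i\hookrightarrow \cL_r M_i^{\leq d}$ (itself the composite of the open immersion $U_i\hookrightarrow \cL_r X^{\leq d}$ with the closed immersion $\cL_r X^{\leq d}\hookrightarrow \cL_r M_i^{\leq d}$ induced by $X\hookrightarrow M_i$), and in particular is of finite presentation. Proposition \ref{findec} then produces, by base change, a formal atlas $\widetilde W^{(i)}\to V_i$ with all required properties.

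\medskip

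Setting $Z:=\coprod_i \widetilde W^{(i)}$ and composing with the affine open immersions $V_i\hookrightarrow \cL X^{\leq d}$ yields the desired atlas: affineness, surjectivity and the Weierstrass property follow from stability under composition with affine \'etale open immersions and under finite coproducts; for condition (2) one takes $T:=\coprod_i T_i$, chosen so that a common $d$ makes all $T_i$ of finite presentation over the same iterated fibre product $S_d\times_{\cQ_d}\cdots\times_{\cQ_d}S_d$, so that $\coprod_i T_i$ is again of type $(S)$, and $Z_{dec}\simeq T\times\ab^{\NN}$ because the $\mathrm{dec}$-construction commutes with finite coproducts. The main obstacle I anticipate is purely bookkeeping: checking that finite disjoint unions preserve the type $(S)$ property as used above, and carefully identifying $V_i$ as an affine open of $\cL X^{\leq d}$ equipped with a finite-presentation map to $\cL M_i^{\leq d}$ so that Proposition \ref{findec} applies; both are routine once one has Propositions \ref{findec} and \ref{dl} together with the stability under base change and composition of the Weierstrass class.
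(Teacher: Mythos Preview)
You have proved the wrong statement. The proposition in question is Proposition~\ref{dl}, which asserts the \emph{existence} of the special complete intersections $M_i$ and the open cover $\{U_i\}$ of $\cL_r X^{\leq d}$ with the listed compatibilities. Your proposal does not construct these $M_i$ or $U_i$ at all; instead it \emph{assumes} Proposition~\ref{dl} as an input (``Proposition~\ref{dl} produces a finite affine open cover $\{U_i\}$\dots'') and then carries out the base-change argument that deduces Theorem~\ref{fonda} for general $X$ from the special-complete-intersection case. That argument is essentially the proof of Proposition~\ref{redu}, not of Proposition~\ref{dl}.

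What is actually required here is the construction from Denef--Loeser \cite{DL} and Musta\c{t}\u{a}--Ein \cite[Prop.~4.1, Lem.~4.2]{ME}: embed $X\hookrightarrow\ab^N$ with equations $f_1,\dots,f_m$; at each point of $\cL_r X^{\leq d}$ choose a subset of $N-\dim X$ equations and a subset of variables so that the corresponding Jacobian minor is a non-degenerate series of order $\leq d$; the zero locus of those equations is the special complete intersection $M_i$, and the locus where that particular minor has order $\leq d$ is the affine open $U_i$. One then checks that the arcs on $X$ over $U_i$ agree with the arcs on $M_i$ over $U_i$, which is the content of Lemma~4.2 in \cite{ME} (a Hensel-type argument using the extra equations and the non-degeneracy of the minor). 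The paper does not reproduce this; it simply cites those references. Your text contains none of this construction, so as a proof of Proposition~\ref{dl} it has a genuine gap: the entire content of the proposition is missing.
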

Passons maintenant à la preuve de la \Cref{redu}:
\begin{proof}
Tout d'abord, on peut se ramener immédiatement au cas où $X$ est affine en prenant un recouvrement par des ouverts affines.
On considère alors des ouverts $U_{i}$, $V_{i}$ et un schéma intersection complète $M_{i}$, tels que dans la \Cref{dl}. Il suffit de montrer l'existence d'un atlas formel pour $V_{i}$. On a alors un carré cartésien:
$$\xymatrix{V_{i}\ar[d]\ar[r]&\clmi^{\leq d}\ar[d]\\U_{i}\ar[r]&\clrmi^{\leq d}}$$
En particulier, par changement de base, on a une immersion de présentation finie $V_{i}\rightarrow\clmi^{\leq d}$. On conclut alors par la \Cref{findec}.
\end{proof}

En plus de l'atlas, on a besoin d'une bonne stratification de l'espace d'arcs.
\begin{theorem}\label{fonda2}
Le schéma $\clx$ admet une stratification finie constructible par des schémas isomorphes à $H\times\ab^{\NN}$ pour un $k$-schéma de type fini $H$.
\end{theorem}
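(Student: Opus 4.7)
Le plan est d'utiliser le th\'eor\`eme \ref{fonda} pour fournir un atlas, de stratifier le sch\'ema de type (S) qui intervient dans cet atlas, puis de descendre cette stratification \`a $\clx$.

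On applique le th\'eor\`eme \ref{fonda} pour obtenir un morphisme Weierstrass affine surjectif $f:Z\to\clx$ avec $Z_{dec}\simeq T\times\ab^{\NN}$ et $T$ un sch\'ema de type (S). Pour stratifier $T$, on rappelle qu'il est de pr\'esentation finie au-dessus d'un produit fibr\'e de copies de $S_d$ sur $\cQ_d$. Or $\cQ_d$ admet une stratification finie par le type de factorisation du polyn\^ome unitaire universel, et au-dessus de chaque telle strate la fibre de $S_d$ est un sch\'ema de type fini : ceci est d\'ej\`a explicite pour $d=1$, o\`u la fibre au-dessus de $\{a\neq 0\}\subset\cQ_1$ est isomorphe \`a $\ab^{1}$, tandis que celle au-dessus de $\{a=0\}$ est r\'eduite \`a un point. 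En prenant les produits fibr\'es et en tirant en arri\`ere par le morphisme de pr\'esentation finie $T\to S_d\times_{\cQ_d}\cdots\times_{\cQ_d}S_d$, on en d\'eduit une stratification finie constructible de $T$ par des $k$-sch\'emas de type fini $H_\alpha$, puis de $Z$ (via l'hom\'eomorphisme $Z_{dec}\to Z$ du lemme \ref{decente}) par des strates $Z_\alpha\simeq H_\alpha\times\ab^{\NN}$.

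Il reste \`a descendre ces strates \`a $\clx$, ce qui constitue la principale difficult\'e. Pour chaque $\alpha$, l'image $W_\alpha := f(Z_\alpha)\subset\clx$ est localement ferm\'ee constructible, puisque $f$ est pro-lisse surjective. Le point d\'elicat est de montrer que la structure produit $H_\alpha\times\ab^{\NN}$ descend intrins\`equement \`a $W_\alpha$ : comme $f$ n'est pas de type fini, le facteur $\ab^{\NN}$ ne se lit pas directement sur $\clx$. Pour le d\'egager, on exploite la propri\'et\'e Weierstrass de $f$, selon laquelle $f$ induit un isomorphisme sur les voisinages formels en tout $k$-point ; ceci indique que le facteur $\ab^{\NN}$ de $Z_\alpha$ provient de directions formellement lisses intrins\`eques \`a $\clx$. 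Concr\`etement, on utilisera le lemme \ref{section} pour choisir, au-dessus de chaque $W_\alpha$ et localement pour la topologie pro-\'etale, des sections de $f$ qui permettront de transporter la structure produit et d'identifier $W_\alpha\simeq H'_\alpha\times\ab^{\NN}$ pour un certain $k$-sch\'ema de type fini $H'_\alpha$ ; la canonicit\'e, \`a isomorphisme pr\`es, des mod\`eles formels de Drinfeld--Grinberg--Kazhdan assurera la coh\'erence des recollements entre strates.
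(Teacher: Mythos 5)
Votre strat\'egie (partir de l'atlas du th\'eor\`eme \ref{fonda} et \emph{descendre} une stratification de $Z$ \`a $\clx$) n'est pas celle du texte, et l'\'etape de descente, que vous identifiez vous-m\^eme comme la difficult\'e principale, n'est pas franchie. Trois obstacles concrets : (a) le morphisme $f$ n'est \emph{strictement} Weierstrass (isomorphisme sur les compl\'et\'es formels) qu'au-dessus de la strate ouverte $f^{-1}(\clxd)$ ; ailleurs il est seulement formellement lisse sur les compl\'et\'es, donc l'argument \og le facteur $\ab^{\NN}$ provient de directions intrins\`eques \`a $\clx$ \fg{} ne s'applique pas en tout point. (b) Les sections pro-\'etale-locales du lemme \ref{section} ne donnent au mieux que des trivialisations locales de $W_\alpha$ ; pour obtenir un isomorphisme global $W_\alpha\simeq H'_\alpha\times\ab^{\NN}$ il faudrait recoller, et la \og canonicit\'e \fg{} des mod\`eles de Drinfeld--Grinberg--Kazhdan que vous invoquez n'existe pas : ces mod\`eles ne sont d\'efinis qu'\`a \'equivalence formelle pr\`es, et l'exemple final de l'article (le c\^one quadratique) montre pr\'ecis\'ement que ce recollement est \emph{impossible} avec des sch\'emas de type fini --- c'est la raison d'\^etre des sch\'emas de type (S). (c) Les images $f(Z_\alpha)$ des strates de $Z$ n'ont aucune raison d'\^etre deux \`a deux disjointes, donc ne forment pas a priori une stratification de $\clx$.

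La preuve du texte n'utilise pas l'atlas. Apr\`es r\'eduction au cas d'intersection compl\`ete sp\'eciale (proposition \ref{dl} plus descente noeth\'erienne pour une immersion ferm\'ee constructible $F\rightarrow H\times\ab^{\NN}$), on stratifie $\clx$ par les strates naturelles $\cL X^{=e}$ ($e\leq d$, ordre de contact avec le lieu singulier) et on montre \emph{directement} que $\cL X^{=e}\simeq D_{e}\times\cL\ab^{m}$ : on introduit le foncteur $B_{e}$ des $\bar{x}\in R[[t]]^{m}$ avec $f(\bar{x})=0~[t^{2e+1}]$ et $\psi(\bar{x})=t^{e}u$, $u$ inversible, qui est visiblement un produit $D_{e}\times\cL\ab^{m}$ puisque les \'equations ne portent que sur la troncation modulo $t^{2e+1}$ ; puis un d\'eveloppement de Taylor et l'argument de la proposition \ref{dec2} (ici sans hypoth\`ese de d\'ecence, car $\psi(\bar{x})=t^{e}u$ n'est jamais diviseur de z\'ero) donnent l'existence et l'unicit\'e du $\nu$ tel que $f(\bar{x}+t^{2e+1}\nu)=0$, d'o\`u l'isomorphisme de foncteurs $\cL X^{=e}\simeq B_{e}$. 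Si vous voulez sauver votre plan, c'est cette construction explicite strate par strate qu'il faut substituer \`a l'\'etape de descente.
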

\begin{prop}
Le \Cref{fonda2} se déduit du cas intersection complète spécial.
\end{prop}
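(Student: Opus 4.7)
La strat\'egie suivra celle de la Proposition \ref{redu}. On commencera par se ramener au cas o\`u $X$ est affine : en recouvrant $X$ par un nombre fini d'ouverts affines $X = \bigcup_{\alpha} X_{\alpha}$, les $\mathcal{L}X_{\alpha}^{\leq d}$ formeront un recouvrement ouvert fini de $\mathcal{L}X^{\leq d}$, et les stratifications constructibles finies obtenues sur chaque facteur, en strates de la forme $H \times \mathbb{A}^{\mathbb{N}}$, se recolleront par raffinement commun en une stratification globale du m\^eme type.

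Pour $X$ affine, on appliquera la Proposition \ref{dl} avec $r = d$, ce qui produira un recouvrement ouvert fini $\mathcal{L}X^{\leq d} = \bigcup_i V_i$ o\`u chaque $V_i$ s'ins\`ere dans le carr\'e cart\'esien
\[
\xymatrix{V_i \ar[d] \ar[r] & \mathcal{L}M_i^{\leq d} \ar[d] \\ U_i \ar[r] & \mathcal{L}_r M_i^{\leq d}}
\]
avec $M_i$ une intersection compl\`ete sp\'eciale et $U_i$ un ouvert affine de $\mathcal{L}_r X^{\leq d}$, vu, via l'immersion ferm\'ee $X \hookrightarrow M_i$, comme sous-sch\'ema localement ferm\'e de pr\'esentation finie dans $\mathcal{L}_r M_i^{\leq d}$. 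Par changement de base, $V_i \hookrightarrow \mathcal{L}M_i^{\leq d}$ sera une immersion localement ferm\'ee de pr\'esentation finie. L'hypoth\`ese fournira pour chaque $M_i$ une stratification constructible finie $\mathcal{L}M_i^{\leq d} = \bigsqcup_j S_j^{(i)}$ avec $S_j^{(i)} \simeq H_j^{(i)} \times \mathbb{A}^{\mathbb{N}}$, $H_j^{(i)}$ de type fini, dont la trace sur $V_i$ donnera $V_i = \bigsqcup_j (V_i \cap S_j^{(i)})$, chaque terme \'etant localement ferm\'e de pr\'esentation finie dans $H_j^{(i)} \times \mathbb{A}^{\mathbb{N}}$.

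Le point-cl\'e, qui constituera le principal obstacle, sera l'observation suivante : tout \'el\'ement de l'anneau $\mathcal{O}(H_j^{(i)}) \otimes_k k[x_1, x_2, \dots]$ n'implique qu'un nombre fini de variables $x_\ell$. Comme $V_i \cap S_j^{(i)}$ est d\'efini par un nombre fini d'\'equations (son id\'eal de fermeture ainsi que l'id\'eal d\'efinissant l'ouvert dont il est la trace), il existera un entier $N$ tel que toutes celles-ci proviennent de $H_j^{(i)} \times \mathbb{A}^N$; on en d\'eduira un isomorphisme $V_i \cap S_j^{(i)} \simeq W_{i,j} \times \mathbb{A}^{\mathbb{N}}$ avec $W_{i,j}$ localement ferm\'e de type fini dans $H_j^{(i)} \times \mathbb{A}^N$. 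Un raffinement \'eventuel des $W_{i,j}$ par Chevalley, puis le recollement des stratifications obtenues sur les $V_i$ par raffinement commun, livreront la stratification globale voulue de $\mathcal{L}X^{\leq d}$, le reste relevant de manipulations standards sur les stratifications constructibles.
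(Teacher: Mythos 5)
Votre démonstration est correcte et suit essentiellement la même voie que celle du texte : réduction au cas affine, application de la proposition \ref{dl} pour se ramener aux intersections complètes spéciales, puis l'argument-clé de descente noethérienne (une immersion constructible de présentation finie dans $H\times\ab^{\NN}$ ne fait intervenir qu'un nombre fini de variables, donc sa source est encore de la forme $H_{1}\times\ab^{\NN}$). Vous explicitez simplement ce que le texte résume par \og cela se déduit immédiatement par descente noethérienne\fg, ainsi que le recollement final des stratifications.
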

\begin{proof}
On utilise à  nouveau la \Cref{dl} et l'assertion se réduit à montrer que si l'on a une immersion localement fermée de présentation finie:
\[F\rightarrow H\times\ab^{\NN} 
\]
alors $F$ est également de la forme $H_{1}\times\ab^{\NN}$ avec $H_{1}$ un $k$-schéma de type fini et cela se déduit immédiatement par descente noethérienne.
\end{proof}

On montre dans la section \ref{lcifonda}  les théorèmes \ref{fonda}, \ref{fonda2} dans le cas d'intersection complète spécial. 
\subsection{Le cas d'intersection complète spécial}
Soit $A$ un anneau commutatif, une série $x\in A[[t]]$ est \textsl{non-dégénérée} si pour tout morphisme $\nu:A\rightarrow K$ avec $K$ un corps, $\nu(x)\in K[[t]]\cap K((t))^{\times}$.
\begin{lem}\label{dec1}
On a les assertions suivantes:
\benumr
\item
Soit $B\in\Inf_{k}$, alors si $x\in B[[t]]$ est non-dégénérée, on a
$x\in B[[t]]\cap B((t))^{\times}$.
\item
Soit $A$ un anneau décent, une série non-dégénérée $x\in A[[t]]$, alors la multiplication par $x$ est injective.
\eenum
\end{lem}
\begin{proof}
(i) Soit $\km$ l'idéal maximal de $B$.
Comme $\bar{x}=t^{b}u(t)\in k[[t]]\cap k((t))^{\times}$ et que par hypothèse $\km^{r}=(0)$ pour un certain $r\in\NN$, on obtient qu'il existe $N\in\NN$ tel que $x^{N}=t^{bN}H(t)$ avec $H(t)\in B[[t]]^{\times}$, d'où l'on déduit que $x^{N}\in B[[t]]\cap B((t))^{\times}$ ainsi que $x$.

(ii) On commence par prouver l'énoncé pour les objets de $\Inf_{k}$.
Si $B\in\Inf_{k}$ et $x\in B[[t]]$ est non-dégénérée, d'après (i), $x\in B((t))^{\times}$ donc la multiplication sur $B((t))$ est bijective et comme $B[[t]]\rightarrow B((t))$ est injective, on a le résultat pour $B$.
Si $A$ est décent, la multiplication est injective si et seulement si elle l'est pour tout $B\in\Inf_{k}$, et cela se déduit du cas précédent.
\end{proof}

\begin{prop}\label{dec2}
Soit $A$ un anneau décent,  $\phi\in M_{n}(A[[t]])$ tel que $\psi:=\det(\phi)\in A[[t]]$ est une série non-dégénérée, alors l'application linéaire associée:
\[\phi: A[[t]]^{n}\rightarrow A[[t]]^{n}
\]
 est injective et son image consiste en l'ensemble des $u\in A[[t]]^{n}$ tels que:
\[\phi'u=0~\mod\psi.
\]
où $\phi'$ est la matrice adjointe de $\phi$, telle que $\phi'\phi=\psi.I_{n}$.
\end{prop}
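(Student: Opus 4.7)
The plan is to exploit the standard adjugate identity $\phi'\phi = \phi\phi' = \psi \cdot I_n$ combined with the injectivity of multiplication by a non-degenerate series (Lemma \ref{dec1}). Everything will follow from these two facts.

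For injectivity of $\phi$: suppose $u \in A[[t]]^n$ satisfies $\phi u = 0$. Multiplying on the left by the adjugate matrix $\phi'$ gives $\phi'\phi u = \psi u = 0$. Since $\psi$ is a non-degenerate power series and $A$ is decent, Lemma \ref{dec1} asserts that multiplication by $\psi$ on $A[[t]]$ (hence componentwise on $A[[t]]^n$) is injective, so $u = 0$.

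For the description of the image, one inclusion is immediate: if $u = \phi v$ then $\phi' u = \phi'\phi v = \psi v$, so $\phi' u \equiv 0 \pmod \psi$. For the converse, suppose $u \in A[[t]]^n$ satisfies $\phi' u = \psi w$ for some $w \in A[[t]]^n$. Multiplying by $\phi$ on the left gives $\phi\phi' u = \psi u$ on one side and $\psi \phi w$ on the other, so $\psi(u - \phi w) = 0$. Again by Lemma \ref{dec1}, multiplication by $\psi$ is injective, hence $u = \phi w$ lies in the image of $\phi$.

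I do not anticipate a serious obstacle here: the argument is a purely formal manipulation once one has the injectivity of multiplication by $\psi$, which is exactly the content of Lemma \ref{dec1}. The only point that deserves care is that the hypothesis \emph{non-degenerate} on $\psi = \det(\phi)$ is used solely through this lemma, and the hypothesis that $A$ is decent is what allows us to reduce that statement to the case of $B \in \Inf_{\bZ}$, where $\psi$ becomes a unit in $B(\!(t)\!)$.
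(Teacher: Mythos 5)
Votre preuve est correcte et suit essentiellement la m\^eme d\'emarche que celle de l'article : l'identit\'e de Cramer $\phi'\phi=\phi\phi'=\psi\cdot I_{n}$ combin\'ee \`a l'injectivit\'e de la multiplication par $\psi$ (lemme \ref{dec1}) donne \`a la fois l'injectivit\'e de $\phi$ et la caract\'erisation de son image. Rien \`a redire.
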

\begin{proof}
De l'égalité de Cramer $\phi\phi'=\phi'\phi=\psi.I_{n}$ et du \Cref{dec1}, on en déduit que $\phi$ et $\phi'$  sont injectives.
De plus, si l'on a deux vecteurs $u,\nu\in A[[t]]^{n}$, de l'injectivité de $\phi$ et $\phi'$, on déduit que $\phi (\nu)=u$ si et seulement si $\phi'(u)=\psi \nu$. En particulier, $u\in\Ima\phi$ si et seulement si $\phi'(u)=0~\mod\psi.$
\end{proof}
Soient deux entiers $m,n\in\NN$ avec $m\geq n$.
On définit le schéma intersection complète $X$, comme le sous-schéma fermé de $\Spec k[x_{1},\dots,x_{m-n},y_{1},\dots,y_{n}]$ défini par les équations $f_{1}=\dots=f_{n}=0$. On note $f:=(f_{1},\dots,f_{n})$ et pour $\g:=(x,y)=(x_{1},\dots,x_{m-n},y_{1},\dots,y_{n})\in\ab^{m}$, on considère la matrice:
\[\phi(\g):=(\frac{\partial f_{i}(x,y)}{\partial y_{j}})_{1\leq i,j\leq n}.
\]
et on note $\psi(\g):=\det(\phi(\g))$.
Soit $M:=\cL\ab^{m-n}\times\cQ_{d}\times\cL\bG_{m}\times A_{d+1}^{n}\times\cL\ab^{n}$, on a une immersion fermée $(i,\psi):X\hookrightarrow \ab^{m}\times\ab^{1}$ qui induit une immersion fermée, notée de la même manière $(i,\psi):\cL X\hookrightarrow\cL\ab^{m}\times\cL\ab^{1}$, on considère alors le carré cartésien:
$$\xymatrix{N_{0}\ar[rr]\ar[d]&&\clx\ar[d]^{(i,\psi)}\\M\ar[rr]^-{(\Id,\beta_{d+1},\al_{d})}&&\cL\ab^{m}\times\cL\ab^{\leq d}}$$
où $\al_{d}$ est donnée par \eqref{aldef}, $\beta_{d+1}$  par \eqref{beta} relativement au polynôme $tq$ et $(i,\psi)$ est une immersion fermée par restriction à l'ouvert $\cL\ab^{m}\times\cL\ab^{\leq d}\subset\cL\ab^{m}\times\cL\ab^1$.
Le schéma $N_{0}$ classifie les uplets $(x,q,u,\bar{y},\xi)\in M$ tels que:
\begin{equation}
f(x,\bar{y}+tq\xi)=0,
\label{eqn}
\end{equation}
\begin{equation}
\psi(x,y)=qu~\text{et}~ y=\bar{y}+tq\xi.
\label{eqn2}
\end{equation}
D'après les  propositions \ref{alw} et\ref{betw} et par changement de base, la flèche $N_{0}\rightarrow\clx$ est Weierstrass, affine et surjective. On a de plus, une immersion fermée $N_0\hookrightarrow M$. On considère alors le sous-schéma fermé $N_{1}\subset M$ qui consiste en les uplets $(x,q,v,\bar{y},\nu)\in M$ tels que:
\begin{equation}
\phi'(\g',f(\g'))+tq^{2}\nu=0,
\label{eqn3}
\end{equation}
\begin{equation}
\psi(\g')=qv.
\label{eqn4}
\end{equation}
Ici, on a posé $\g':=(x,\bar{y})$ et  $\phi'(\g',.)$ est l'endomorphisme adjoint de $\phi(\g,.)$.
\begin{prop}\label{isodec}
On a un isomorphisme canonique $\theta:M\rightarrow M$ de telle sorte que la composée:
\[N_{0}\hookrightarrow M\stackrel{\theta}{\rightarrow} M
\]
se factorise en une immersion fermée $N_0\hookrightarrow N_1$.
De plus, cette immersion induit un isomorphisme pour tout anneau commutatif décent.
En particulier, c'est une immersion décente.
\end{prop}
\begin{proof}
On fait le développement de Taylor de \eqref{eqn} par rapport à la variable $y$, soit:
\begin{equation}
f(x,\bar{y}+tq\xi)=f(x,\bar{y})+tq\phi(x,\bar{y})\xi+t^2q^2H(x,y)=0,
\label{tayl}
\end{equation}
avec $H:=(H_{1},\dots,H_{n})\in A[[t]][x,y]$. En appliquant $\phi'(\g',.)$ avec $\g':=(x,\bar{y})$, on obtient:
\begin{equation}
\phi'(\g,f(\g'))+tq\psi(\g')\xi+t^2q^2\phi'(\g',H(x,y))=0.
\label{tayl2}
\end{equation}
D'autre part, en faisant également un développement de Taylor de \eqref{eqn2} on a :
\begin{equation}
\psi(\g')=q(u+tqB(x,y)).
\label{detp}
\end{equation}
On pose  $v:=u+tqB(x,y)\in\cL\bG_m$. Ainsi, \eqref{tayl2} se récrit:
\begin{equation}
\phi'(\g',f(\g'))+tq^2[v\xi+t\phi'(\g',H(x,y))]=0.
\label{tayl13}
\end{equation}
On considère alors la flèche $\theta:M\rightarrow M$ donnée par:
\[(x,q,u,\bar{y},\xi)\mapsto (x,q,v,\bar{y},v\xi+t\phi'(\g',H(x,y))).
\]
Il résulte alors de \eqref{tayl13} que la composée $N_{0}\hookrightarrow M\stackrel{\theta}{\rightarrow} M$ se factorise par $N_1$.
Montrons que $\theta$ est un isomorphisme et que la flèche $N_0\rightarrow N_1$ est bijective sur les $A$-points pour $A$ décent. On considère $(x,q,v,\bar{y},\nu)\in M(A)$, d'après \ref{suivant}, il existe un unique $\xi$ tel que:
\[\nu=v\xi+t\phi'(\g',H(x,y))~\text{avec}~~ y=\bar{y}+q\xi.
\]
et on peut donc poser $u:=v-tB(x,y)$ avec $B(x,y)$ défini par \eqref{detp}, pour obtenir la fonction réciproque de $\theta$. Si de plus,  $(x,q,v,\bar{y},\nu)\in N_1(A)$, alors ils vérifient \eqref{eqn3}, on obtient un point de $N_{0}(A)$ à l'aide de la proposition \ref{dec2} si $A$ est décent.
\end{proof}
\begin{lem}\label{suivant}
	Soit $h:\ab^{n}\rightarrow \ab^{n}$ une application polynomiale, alors pour tout anneau commutatif $A$ et $\la\in A[[t]]^{\times}$, l'application:
		\[A[[t]]^{n}\rightarrow A[[t]]^n
	\]
	$\nu_{0}\mapsto\la\nu_{0}+th(\nu_{0})$ est bijective.
\end{lem}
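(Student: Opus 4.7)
The statement is a $t$-adic Hensel/Newton-type fixed point lemma. My plan is to solve the equation $\nu_{0}+th(\nu_{0})=\nu_{1}$ coefficient by coefficient in $t$, using that $h$ is polynomial so that its evaluation on a power series is controlled mod $t^{k}$ by its evaluation mod $t^{k}$ of the argument.

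First I would reformulate the problem as a fixed-point problem: solving $\nu_{0}+th(\nu_{0})=\nu_{1}$ for a given $\nu_{1}\in A[[t]]^{n}$ is equivalent to finding a fixed point of the self-map $F:A[[t]]^{n}\to A[[t]]^{n}$ defined by $F(\nu)=\nu_{1}-th(\nu)$. Equip $A[[t]]^{n}$ with the $t$-adic topology, for which it is complete and separated. The key observation is that since $h$ is polynomial, if $\nu\equiv\nu'\pmod{t^{k}A[[t]]^{n}}$ then $h(\nu)\equiv h(\nu')\pmod{t^{k}A[[t]]^{n}}$, so $F(\nu)\equiv F(\nu')\pmod{t^{k+1}A[[t]]^{n}}$. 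Thus $F$ is a strict $t$-adic contraction, and the Banach-style argument yields a unique fixed point, obtained as the $t$-adic limit of $F^{m}(0)$.

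Concretely, writing $\nu_{0}=\sum_{k\geq 0}a_{k}t^{k}$ and $\nu_{1}=\sum_{k\geq 0}b_{k}t^{k}$ with $a_{k},b_{k}\in A^{n}$, the coefficient of $t^{0}$ in the equation forces $a_{0}=b_{0}$, and for $k\geq 1$ we get
\begin{equation*}
a_{k}=b_{k}-[t^{k-1}]\,h\!\left(\sum_{j\geq 0}a_{j}t^{j}\right),
\end{equation*}
where the bracketed coefficient depends only on $a_{0},\dots,a_{k-1}$ because $h$ is polynomial (any monomial of degree $d$ in the entries of $\nu_{0}$ contributes to $[t^{k-1}]$ only through the $a_{j}$ with $j\leq k-1$). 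This explicit recursion simultaneously proves existence of a solution (surjectivity) and its uniqueness (injectivity).

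There is really no obstacle here beyond checking the induction: the fact that $h$ is polynomial is crucial, since it guarantees the $t$-adic continuity and the finite dependence of each coefficient of $h(\nu_{0})$ on finitely many $a_{j}$. No hypothesis on $A$ (decency, noetherianity, reducedness) is needed, which matches the statement. The proof is thus a short two-step argument: (i) the map is a $t$-adic contraction of degree one, (ii) $A[[t]]^{n}$ is $t$-adically complete and separated, so a unique fixed point exists.
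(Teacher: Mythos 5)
Votre preuve est correcte et suit essentiellement la m\^eme strat\'egie que celle de l'article : on r\'e\'ecrit l'\'equation comme un probl\`eme de point fixe pour $\nu\mapsto\nu_{1}-th(\nu)$, on observe que $h$ polynomiale rend cette application contractante pour la topologie $t$-adique (l'article utilise la norme $\vert x\vert=2^{-d}$ et le th\'eor\`eme de Banach), et la compl\'etude de $A[[t]]^{n}$ donne l'existence et l'unicit\'e du point fixe. La r\'ecurrence explicite sur les coefficients que vous ajoutez est une reformulation \'equivalente du m\^eme argument.
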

\begin{proof}
Soit $x\in A[[t]]^n$, on définit la norme $\vert x\vert=2^{-d}$ où $d$ est le plus petit entier tel que la réduction de $x$ modulo $t^{d}$ soit non-nulle.
Pour cette norme, $A[[t]]^n$ est un espace métrique complet, comme on considère la topologie limite projective.

Soit $\nu_{1}\in A[[t]]^n$, il suffit de montrer que l'application 
$\nu\mapsto\phi(\nu)=\la^{-1}[\nu_{1}-th(\nu)]$ admet un point fixe. Comme $h$ est polynomiale, $\vert h(\nu)-h(w)\vert\leq\vert\nu- w\vert$ et $\vert\la\vert=1$ comme $\la$ inversible, on déduit: 
\[\vert \phi(\nu)-\phi(w)\vert\leq\frac{1}{2}\vert\nu- w\vert.
\]
 On obtient donc que $\phi$ est contractante et admet donc un unique point fixe par le théorème de Banach.
\end{proof}
\subsection{Décomposition en morceaux}\label{lcifonda}
On va maintenant utiliser le schéma $N_{1}$ pour séparer la partie de dimension finie de la partie espace affine de dimension infinie.
Dans la suite, pour simplifier les notations, on pose $c(x,\bar{y}):=\phi'(\g',f(x,\bar{y}))$.
On forme le carré cartésien:
$$\xymatrix{N_{2}\ar[d]\ar[rr]^{f_{1}}&&N_{1}\ar[d]^{\pr}\\\cQ_{d}\times A_{d+1}^{m-n}\times\cL\ab^{m-n}\ar[rr]^-{\beta_{d}}&&\cQ_{d}\times\cL\ab^{m-n}}$$
avec 
\begin{equation}
\pr(x,q,v,\bar{y},\nu)=(q,x)
\label{gammadef}
\end{equation}
 et $\beta_{d}$ donnée par:
\[\beta_{d}(q,\bar{x},\xi)=(q,\bar{x}+tq\xi).
\]
A nouveau, comme $\beta_{d}$ est Weierstrass, affine, surjectif, $f_{1}$ l'est aussi. Montrons que $N_{2}$ est isomorphe à un produit d'un schéma de type (S) avec un $\ab^{\NN}$.
Le schéma $N_{2}$ est donné par les équations:
\begin{equation}
c(\bar{x}+tq\xi,\bar{y})+tq^2\nu=0~ \text{avec} ~x=\bar{x}+tq\xi,
\label{eq9}
\end{equation}
\begin{equation}
\psi(x,\bar{y})=qv.
\label{eq10}
\end{equation}
Pour décomposer le schéma $N_2$, on va faire \og une division euclidienne\fg~ par $tq$.
Soit $Z$ un $k$-schéma affine de type fini. On considère le foncteur $Z_{\cQ_{d}}$ sur la catégorie des $k$-algèbres:
\[Z_{\cQ_{d}}(R):=\{(q,x), q\in\cQ_{d}(R), x:\Spec(R[t]/q)\rightarrow Z\}
\]
Il est représentable par un schéma affine de type fini au-dessus de $\cQ_{d}$. Sa fibre au-dessus d'un polynôme séparable est le produit $Z^{d}$ et au-dessus de $t^{d}$, on obtient $\cL_{d-1}Z$.
Dans le cas $Z=\ab^{1}$, on a:
\[\ab^{1}_{\cQ_{d}}(R):=\{(q,x)\vert~q\in\cQ_{d}(R), x\in R[t]/(q)\},
\]
qui est un fibré vectoriel de rang $d$ sur $\cQ_{d}$.
Comme la flèche canonique $A_{d}(R)\rightarrow R[t]/(q)$ est un isomorphisme par la division euclidienne, ce fibré se trivialise en :
\begin{equation}
\cQ_{d}\times A_{d}\cong\ab^{1}_{\cQ_{d}}.
\label{diveucl}
\end{equation}
A tout morphisme $f:Z\rightarrow V$ et tout entier $r\in\NN$, on a par fonctorialité une flèche
\begin{equation}
f_{\cQ_{r}}:Z_{\cQ_{r}}\rightarrow V_{\cQ_{r}}.
\label{foncteq}
\end{equation}
En prenant $r=d+1$ et $r=2d+1$, les morphismes $\psi:\ab^m\rightarrow\ab^{1}$ et $c:\ab^{m}\rightarrow\ab^{n}$ induisent des morphismes:
\[\psi_{\cQ_{d+1}}:\ab_{\cQ_{d+1}}^m\rightarrow\ab_{\cQ_{d+1}}^{1} \text{et}~~ c_{\cQ_{2d+1}}:\ab_{\cQ_{2d+1}}^m\rightarrow\ab_{\cQ_{2d+1}}^{n}.
\]
De plus, on rappelle que dans le cas d'un espace affine, on a par division euclidienne (cf.\eqref{diveucl}) 
\[\forall~k\in\NN, \ab^{k}_{\cQ_{d+1}}\cong\cQ_{d+1}\times A_{d+1}^{k}.
\]
On considère alors la composée:
\[\bar{\psi}:\cQ_{d}\times A_{d+1}^{m}\rightarrow\ab_{\cQ_{d+1}}^m\stackrel{\psi_{\cQ_{d+1}}}{\rightarrow}\cQ_{d+1}\times A_{d+1},
\]
où la première flèche est donnée $(t.,\Id)$ où la première coordonnée désigne la multiplication par $t$ ainsi que
\[\bar{c}:\cQ_{d}\times A_{2d+1}^{m}\rightarrow\ab_{\cQ_{2d+1}}^m\stackrel{c_{\cQ_{2d+1}}}{\rightarrow}\cQ_{2d+1}\times A_{2d+1}^{n}.
\]
où la première flèche est induite par $\cQ_{d}\rightarrow\cQ_{2d+1}$ qui envoie $q$ sur $tq^2$.
Ceci nous permet donc d'écrire:
\begin{equation}
c(\bar{x}+tq\xi,\bar{y})=\bar{c}(\bar{\g},q)+tq^2c'(x,\bar{y}),
\label{eq11}
\end{equation}
\begin{equation}
\psi(x,\bar{y})=\bar{\psi}(\bar{\g},q)+tq\psi'(x,\bar{y}).
\label{eq12}
\end{equation}
avec $\bar{\g}:=(\bar{x},\bar{y})$ et $x=\bar{x}+tq\xi$. On peut donc récrire $N_2$ comme l'espace des uplets $(q,\bar{x},\bar{y},\xi,\nu,v)$ tels que:
\begin{equation}
\bar{c}(\bar{\g},q)+tq^2[c'(x,\bar{y})+\nu]=0,
\label{eq13}
\end{equation}
\begin{equation}
(\bar{\psi}(\bar{\g},q)-qv_0)+tq[\psi'(x,\bar{y})-v']=0.
\label{eq14}
\end{equation}
avec $v=v_0+tv'$.
On a alors une flèche canonique de présentation finie:
\[
\psi:\bG_{m}\times\cQ_{d}\times A_{d+1}^{m}\rightarrow\cQ_{d+1}\times\cQ_{2d+1}\times A^{n}_{2d+1}\times A_{d+1}
\]
donnée par $(v_0,q,\bar{x},\bar{y})\mapsto (tq,tq^2,\bar{c}(\bar{\g},q),(\bar{\psi}(\bar{\g},q)-qv_0))$.
On forme alors le carré cartésien:
$$\xymatrix{T\ar[d]\ar[r]&S_{d+1}\times_{k} S_{2d+1}^{(n)}\ar[d]\\\bG_{m}\times\cQ_{d}\times A_{d+1}^{m}\times\cL\ab^n\times\cL\ab^1\ar[r]^-{\psi}&\cQ_{d+1}\times\cQ_{2d+1}\times A^{n}_{2d+1}\times A_{d+1}\times\cL\ab^n\times\cL\ab^1}$$
où la flèche verticale de droite vient de la définition \eqref{defSd} en tant que produit fibré.
Par changement de base, comme $\psi$ est de présentation finie, $T$ est de présentation finie sur $S_{d+1}\times_{k} S^{(n)}_{2d+1}$. En particulier, il est donc de type $(S)$ d'après \ref{ouv}.
Par définition, il consiste en l'espaces des uplets $(v_0,q,\bar{x},\bar{y},\xi_1,\xi_2)$ tels que:
\begin{equation}
\bar{c}(\bar{\g},q)+tq^2\xi_1=0,
\label{eq15}
\end{equation}
\begin{equation}
(\bar{\psi}(\bar{\g},q)-qv_0)+tq\xi_2=0.=,
\label{eq16}
\end{equation}
avec $\bar{\g}=(\bar{x},\bar{y})$.
Il ne reste plus qu'à montrer le lemme suivant:
\begin{lem}\label{isoN}
La flèche
\[N_{2}\rightarrow T\times\cL\ab^{m-n}.
\]
donnée par :
\[(q,\bar{x},\bar{y},\xi,\nu,v)\mapsto ((v_0,q,\bar{x},\bar{y},c'(x,\bar{y})-\nu,\psi'(x,\bar{y})-v'),\xi),
\]
obtenue à partir de \eqref{eq13} et \eqref{eq14} est un isomorphisme.
\end{lem}
On rappelle que l'on écrit $v=v_0+tv'$, $x=\bar{x}+tq\xi$ et que $N_2$ est déterminé par les équations \eqref{eq13} et \eqref{eq14}.
\begin{proof} 
On construit alors une flèche dans le sens opposé :
\[
T\times\cL\ab^{m-n}\rightarrow N_{2},
\]
donnée par:
\[(v_0,q,\bar{x},\bar{y},\xi_1,\xi_2,\xi)\mapsto ((q,\bar{x},\bar{y},\xi,\nu,v)),
\]
où $\nu:=c'(\bar{x}+tq\xi,\bar{y})-\xi_{1}$, $v':=\psi'(\bar{x}+tq\xi,\bar{y})-\xi_2$ et $v=v_0+tv'$.
On vérifie immédiatement que la composée des deux est un isomorphisme.
\end{proof}

On peut terminer la preuve du \Cref{fonda2}:
\begin{proof}
En combinant \ref{isodec}  et \ref{isoN}, on a un diagramme commutatif:
$$\xymatrix{&N_{0}\ar[d]\ar[r]&\clx\\T\times\cL\ab^{m-n}\simeq N_{2}\ar[r]&N_{1}}$$
Toutes les flèches horizontales sont des morphismes affines, surjectifs de Weierstrass et la flèche verticale est une immersion décente.
En particulier, si on pose $Z:=N_{2}\times_{N_{1}}N_{0}$, il est Weierstrass, affine et surjectif et admet une immersion décente:
\[Z\hookrightarrow T\times\cL\ab^{m-n}, 
\]
ce qui donne l'atlas voulu.
\medskip

Il ne reste plus qu'à démontrer l'énoncé d'irréductibilité:

si de plus, $X$ est irréductible, d'après \cite[Thm. 3.15]{NS}, $\cL X^{\leq d}$ l'est également. En particulier,  l'ouvert $\cL X_{reg}$ est dense dans $\cL X^{\leq d}$. Soit $Z^{0}\subset Z$ l'ouvert réciproque de $\cL X_{reg}$ dans $Z$. Comme $Z\rightarrow\cL X^{\leq d}$ est fidèlement plat, par platitude topologique $Z^{0}$ reste dense dans $Z$.
Or, il résulte de \ref{alreg} et \ref{betreg} qu'au-dessus de $\cL X_{reg}=\cL X^{0}$, les flèches $\al_{d}$ et $\beta_{d}$ sont surjectives, lisses et leurs fibres sont des schémas lisses connexes. On en déduit donc que $Z^{0}$ est également irréductible et donc $Z$ par densité.
Enfin, $Z$ et $T\times\ab^{\NN}$ ont même espace topologique donc $T\times\ab^{\NN}$ est irréductible et $T$ également.
\end{proof}
Il reste à montrer le \Cref{fonda2}, il suffit de montrer la proposition suivante:
\begin{prop}
Pour tout $d$, le schéma $\clxd$ est isomorphe à $D_{d}\times\cL\ab^{m}$ avec $D_{d}$ qui est un $k$-schéma de type fini.
\end{prop}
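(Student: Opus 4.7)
The strategy is to restrict the construction $f_d : N'_d \to \clx$ of the preceding proposition to the open stratum $\clxd = \clx - \cln$ and read off the product structure from the diagram, relying on the fact that the stratum is exactly where the Weierstrass decompositions become canonical.

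The key observation is that on $\clxd$, the series $\psi(x)$ has valuation exactly $d$, so the Weierstrass factorisation $\psi(x) = qu$ with $(q,u) \in \cQ_d \times \cL\bG_m$ is uniquely forced to have $q = t^d$, i.e.\ $a_0 = \dots = a_{d-1} = 0$. Pulling back the large cartesian diagram that defined $N_{2,d}$ along this slice $\{q = t^d\} \subset \cQ_d$ collapses the type~$(S)$ factor: by Proposition~\ref{s1} (equivalently, directly from the defining equations $v + q\xi = 0$), the fibre of $S_{d+1}$ over $t^{d+1}$ reduces to a single closed point, since $v + t^{d+1}\xi = 0$ with $\deg v < d+1$ forces $v = 0$ and $\xi = 0$. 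Hence $Y^\flat_d$, which is of type $(S)$ and fibred over $\cQ_{d+1}$, restricts over this slice to a finite-type $k$-scheme $D_d$; the iterated Taylor decomposition $A_d^m \times \cL\ab^m \simeq \cL\ab^m$ supplied by $\beta_d$ at $q = t^d$ provides the $\cL\ab^m$ factor.

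Concretely, the proof would run in two steps. First, I would verify that $f_d|_{f_d^{-1}(\clxd)} : f_d^{-1}(\clxd) \to \clxd$ is an actual scheme isomorphism: on the stratum, the Weierstrass maps $\al_d$ and $\beta_d$ restrict to honest Taylor-expansion isomorphisms, so the strictly Weierstrass property of the preceding proposition upgrades to an explicit scheme-theoretic inverse, given by $x \mapsto (t^d, \psi(x)/t^d, \bar x, \xi)$ where $x = \bar x + t^d \xi$ is the canonical decomposition. Second, I would combine this with the identification $N'_d \simeq Y^\flat_d \times \cL\ab^m$ from the previous proposition (a bijection on decent points) and the collapse of $Y^\flat_d$ to $D_d$ over the slice to conclude $\clxd \simeq D_d \times \cL\ab^m$.

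\textbf{The main obstacle} is lifting the decent-points bijection $g_d$ to a genuine scheme isomorphism on the stratum. Off the stratum, the nilpotent thickenings carried by the type $(S)$ factor obstruct this; on the stratum $\{q = t^d\}$ these thickenings disappear, the $S_{d+1}$-fibre being a single reduced point, so one expects the bijection to upgrade. The cleanest route is to unwind the defining equations \eqref{eq7}--\eqref{eq5} of $N_{2,d}$ directly at $q = t^d$: they reduce to a system in the finitely many coordinates $(u, \bar x) \in \bG_m \times A_{d+1}^m$ (the equations involving $\xi, \nu$ being solvable uniquely for the constrained directions by Lemma \ref{suivant} and free in the remaining $m$ directions). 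This displays $D_d$ as a closed subscheme of $\bG_m \times A_{d+1}^m$ cut out by the leading Taylor coefficients $\bar a$, $\bar c$ of $f$, hence of finite type, and gives the isomorphism explicitly.
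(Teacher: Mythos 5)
Your structural observations are sound --- on the stratum the Weierstrass polynomial is forced to equal $t^{d}$, the fibre of the type $(S)$ scheme over $t^{d+1}$ collapses to a single reduced point, and decency becomes irrelevant because $\psi(\bar x)=t^{d}u$ is a non-zero-divisor --- but Step 1 of your plan contains a genuine gap on which the rest of the argument depends. The restriction $f_{d}^{-1}(\clxd)\rightarrow\clxd$ is \emph{not} an isomorphism of schemas. The property \emph{strictement Weierstrass} only asserts isomorphisms on formal completions at $k$-points; the morphism itself remains pro-smooth with infinite-dimensional fibres even over the stratum. Concretely, $N_{d}\rightarrow\clx$ is pulled back from $\beta(x,\nu)=x+t\psi(x)\nu$, and its fibre over any $x_{0}\in\clxd(R)$ contains an $\cL\ab^{n}$ worth of pairs $(x,\nu)$: for each $\nu$, the contraction argument of Lemma \ref{suivant} produces a unique $x$ with $x+t\psi(x)\nu=x_{0}$. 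Your proposed inverse $x\mapsto(t^{d},\psi(x)/t^{d},\bar x,\xi)$ only splits the $(\al_{d},\beta_{d})$-layer and produces no $\nu$. Consequently your Step 2 identifies the product structure of the \emph{cover} $N'_{d}\vert_{\clxd}$, not of $\clxd$ itself, and descending a decomposition $Y\times\ab^{\NN}$ along a pro-smooth surjection with $\ab^{\NN}$-sized fibres is exactly the kind of statement that does not follow formally.

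The paper sidesteps the atlas entirely for this proposition. It introduces the functor $B_{d}(R)=\{\bar x\in R[[t]]^{m}:\ f(\bar x)\equiv 0\ [t^{2d+1}],\ \psi(\bar x)=t^{d}u,\ u\in R[[t]]^{\times}\}$, whose defining conditions only involve $\bar x$ modulo $t^{2d+1}$, so that $B_{d}\simeq D_{d}\times\cL\ab^{m}$ with $D_{d}$ of finite type comes for free; it then compares $B_{d}$ with $\clxd$ by the adjugate argument of Proposition \ref{dec2}: for $\bar x\in B_{d}(A)$ there is a unique $\nu$ with $f(\bar x+t^{2d+1}\nu)=0$, and no decency hypothesis is needed because $t\psi(\bar x)^{2}=t^{2d+1}u^{2}$ is a non-zero-divisor in $A[[t]]$ for every commutative ring $A$. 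Your final paragraph (unwinding the equations at $q=t^{d}$ and solving uniquely for the constrained directions) is essentially this computation, but you apply it to $N_{2,d}$ rather than to the stratum itself; redirected at $\clxd$ directly, it becomes the paper's proof.
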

\begin{proof}
On considère le foncteur $B_{d}$ sur la catégorie des $k$-algèbres donné par:
\[R\mapsto B_{d}(R):=\{\bar{z}\in R[[t]]^{m}, f(\bar{z})=0~[t^{2d+1}],\psi(\bar{z})=t^{d}u, u\in R[[t]]^{\times}\}.
\]
Comme les équations ne portent que sur la réduction de $\bar{z}$ modulo $t^{2d+1}$, le foncteur est représentable par un schéma de la forme 
$D_{d}\times\cL\ab^{m}$ avec:
\[D_{d}(R):=\{\bar{z}\in (R[t]/(t^{2d+1}))^{m}, f(\bar{z})=0~[t^{2d+1}],\psi(\bar{z})=t^{d}u, u\in R[[t]]^{\times}\}.
\]
Pour toute $k$-algèbre $R$, si $z\in\cL X^{=d}(R)$, on commence par l'écrire $z=\bar{z}+t^{2d+1}\nu\in R[[t]]^{m}$ avec $\nu:=(0,\dots,0,\nu_{1},\dots,\nu_{n})\in R[[t]]^{n}$. En faisant un développement de Taylor de $f(\bar{z}+t^{2n+1}\nu)$, on obtient immédiatement que $\bar{x}$ définit un point de $B_{d}(R)$.
Pour démontrer l'énoncé, il nous suffit de vérifier que les foncteurs sont isomorphes pour tout anneau commutatif $A$.
Si on a un point $\bar{x}\in B_{d}(A)$, le même argument que pour la \Cref{dec2} nous fournit l'existence et unicité d'un $\nu$ tel que:
\[f(x+t^{2d+1}\nu)=0.
\]
Il est à noter que dans ce cas on n'a pas besoin de la condition de décence étant donné que $\psi(\bar{x})=t^{d}u, u\in A[[t]]^{\times}$ n'est jamais un diviseur de zéro. On obtient donc le résultat souhaité.
\end{proof}
\brem Il est indispensable de travailler avec des schémas de type (S).
En effet, ces schémas non-noethériens permettent de recoller les différents modèles formels qui apparaissent dans le théorème de Drinfeld-Grinberg-Kazhdan. En revanche, cela ne peut être fait avec des $k$-schémas de type fini, ainsi que le montre l'exemple suivant.
\erem
\medskip

\textbf{Exemple}: On considère le cas du cône quadratique $X:=\{(x,y,z)\vert xy=z^2\}$.

Si l'on prend les arcs de valuation un et deux, $\g_{1}(t)=(t,0,0)$ et $\g_{2}(t)=(t^2,0,0)$, les modèles formels $Y_{1}$ et $Y_{2}$ sont donnés par $\{v^2=0\}$ et $\{(a,b,v,w)\vert~ aw^{2}=v^{2}, bw^{2}=2wv\}$.
On a alors un morphisme:
\[\phi:Y_{2}\times\cL\bG_{m}\times\cL\ab^{1}\ra\cL X^{\leq 2},
\]
donné par $((a,b,v,w),u(t),\xi(t))\mapsto (qu(t),y(t),v+wt+q\xi(t))$, avec $y(t)$ défini uniquement par $(x(t),z(t))$ et $q=a+bt+t^2$.
Le morphisme $\phi$ induit un isomorphisme formel sur les points qui s'envoient sur $\cL X^{=2}$. 
En revanche, cela n'est plus vrai aux autres points où les singularités des deux côtés sont différentes.
En effet, le quadruplet $(1,0,0,0)$ est un point singulier de $Y_2$ mais s'envoie sur un point lisse de $\cL X$.

\section{Cohomologie étale des espaces d'arcs}
\subsection{Constructibilité pour les $S$-schémas}
Soit $k$ un corps, on considère $n\in\NN^{*}$ premier à la caractéristique. Dans la suite, tous les foncteurs sont dérivés. L'énoncé principal est \ref{immc}.
On commence par rappeler le théorème de Gabber \cite[Exp. XX, 4.4]{ILO} qui est un énoncé de comparaison avec la complétion:
\begin{theorem}\label{base}
Soit $A$ un anneau commutatif, $I\subset A$ un idéal de type fini.
On considère un morphisme $(A,I)\rightarrow (A',I')$ de paires henséliennes avec $I'=IA'$, de telle sorte que l'on a un isomorphisme au niveau des complétions $I$ et $I'$-adiques:
\[A^{\wedge}\simeq A'^{\wedge}.
\]
Soient $X=\Spec(A)$, $X'=\Spec(A')$, $U:=\Spec(A)-V(I)$, $U':=\Spec(A')-V(I')$, $\pi:X'\rightarrow X$, $j:U\rightarrow X$, $j':U'\rightarrow X'$. On suppose $n$ inversible sur $X$, alors pour tout  $K\in D^{b}_{c}(U,\bZ/n)$, le morphisme de changement de base:
\[\pi^{*}j_{*}K\rightarrow j'_{*}\pi^{*}K.
\]
est un isomorphisme.
\end{theorem}
\brems\label{deviss}
\remi 
Il est important de noter que l'on ne fait pas d'hypothèses sur $A$.
\remi
Dans loc.cit., l'énoncé est au niveau des faisceaux de cohomologie et pour $K$ un faisceau de $\bZ/n$-modules. Il s'étend dans notre cas de la manière suivante: d'après \cite[Tag. 095S]{Pro}, la flèche de changement de base est définie dans la catégorie dérivée; pour montrer que c'est un isomorphisme, il suffit de le voir au niveau des faisceaux de cohomologie.
Par un argument de suite spectrale \cite[Tag. 015J]{Pro}, on se ramène ensuite à $K$ qui est un faisceau de $\bZ/n$-modules et là c'est le théorème \cite[Exp. XX, 4.4]{ILO}.
\erems
On utilisera à plusieurs reprises le corollaire suivant:
\begin{cor}\label{base2}
Soit une paire $(A,I)$ avec $I$ un idéal de type fini, $X:=\Spec(A)$, $U:=X-V(I)$ et $X':=\Spec(A^{\wedge})$ où $A^{\wedge}$ est la complétion $I$-adique de $A$, alors on a un énoncé de changement de base pour $j_{*}$ avec $j:U\rightarrow X$ comme ci-dessus.
\end{cor}
\begin{proof}
En effet, il suffit dans un premier temps de faire le changement de base à l'hensélisé $(A^{h},I^{h})$  de $A$ le long de $I$ et ensuite on applique le \Cref{base}.
\end{proof}
\begin{prop}\label{base1}[Changement de base pro-lisse]
	On considère un carré cartésien de schémas qcqs:
	$$\xymatrix{T_{1}\ar[d]_{\tilde{h}}\ar[r]^{\tilde{g}}&T\ar[d]^{h}\\S_{1}\ar[r]^{g}&S}$$
	avec $g$ pro-lisse et $h$ qcqs. Alors, pour tout $K\in D^{+}(\Tet,\bZ/n)$ le morphisme canonique:
		\[g^{*}h_{*}K\rightarrow \tilde{h}_{*}\tilde{g}^{*}K\]
	est une équivalence.
	\end{prop}
\begin{proof}
Il suffit de montrer l'énoncé en prenant les faisceaux de cohomologie.
Le même argument de suite spectrale  que la remarque \ref{deviss}.(ii) nous ramène au $K$ est un faisceau constructible de $\bZ/n$-modules.
L'énoncé se déduit alors de \cite[Exp. XIV, Lem. 2.5.3]{ILO}.
On remarque que dans loc. cit., la preuve est faite pour un morphisme régulier, mais l'hypothèse clé est en fait la pro-lissité.
\end{proof}
\blem\label{pulcons}
Soit $f:Y\rightarrow S$ un morphisme surjectif de schémas qcqs, alors pour tout $K\in D^b_{c}(S,\bZ/n)$, $K$ est constructible si et seulement si $f^{*}K$ l'est.
\elem
\bpf
Le sens direct est immédiat. Comme $f^{*}$ est exact, il suffit de vérifier l'assertion dans le cas où $K$ est un faisceau constructible de $\bZ/n$-modules et alors cela se déduit de \cite[Tag. 095Q]{Pro}.
\epf
Passons maintenant au théorème clé:
\begin{theorem}\label{immc}
Soit $T$ un schéma de type $(S)$, soit $j:U\rightarrow T$ une immersion ouverte quasi-compacte. Soit $K\in D^{b}_{c}(U,\bZ/n)$, alors $j_{*}K$ est constructible.
\end{theorem}
\begin{proof}
Soit $d\in\NN$, tel que $T$ est de type $(S_d)$.
On procède par récurrence sur $d$.
Si $d=0$, alors $T$ est un $k$-schéma de type fini et c'est l'énoncé de constructibilité usuelle \cite[Exp. XIX, 5.1]{SGA4}.
Supposons la propriété vraie pour tout $k\leq d$ et montrons-là pour $d+1$. On écrit alors $T= V_{d-1}\sqcup F_{d-1}$ avec $V_{d-1}$ de type $(S_{d-1})$ et $F_{d-1}$ un $k$-schéma de type fini.
On a sur $U$ un triangle distingué:
\[i_{*}i^{!}K\rightarrow K\rightarrow h_{*}h^{*}K
\]
avec $h:U_{d-1}:=U\cap V_{d-1}\rightarrow U$ et $i:H_{d-1}:=U-V_{d-1}\rightarrow U$ l'immersion fermée complémentaire.
Comme $K$ est constructible, $h^{*}K$ l'est aussi. Montrons que $h_*h^{*}K$ est constructible  sur $U$. Le problème étant local, on peut donc supposer que $U$ est affine.
D'après \ref{dim-comp} le schéma affine $U^{\wedge}$ sous-jacent à la complétion formelle de $U$ le long de $H_{d-1}$ est noethérien excellent. On considère alors le diagramme cartésien:
$$\xymatrix{U_{d-1}^{\wedge}\ar[d]\ar[r]&U^{\wedge}\ar[d]\\U_{d-1}\ar[r]&U}$$
avec $U_{d-1}^{\wedge}$ l'image inverse dans $U^{\wedge}$ de $U_{d-1}$.
D'après \ref{pulcons} et \ref{base2}, il suffit de montrer la constructibilité au niveau de la flèche du haut, qui est une immersion ouverte entre schémas noethériens excellents. Cela se déduit alors du théorème de constructibilité de Gabber \cite[Intro, Thm.1]{ILO}.
Ainsi, $h_*h^{*}K$ est constructible et comme $K$ l'est aussi, on en déduit la constructibilité de $i_*i^!K$.
En appliquant $j_{*}$ à ce triangle, on obtient un triangle distingué:
\[j_{*}(i_*i^!K)\rightarrow j_{*}K\rightarrow j_{*}(h_{*}h^{*}K).
\]
Il suffit donc de montrer que les deux extrémités sont constructibles.
 Pour l'extrémité de droite, on a un diagramme commutatif:
$$\xymatrix{U_{d-1}\ar[r]^{j'}\ar[d]_{h}&V_{d-1}\ar[d]^{h'}\\U\ar[r]^{j}&T}$$
On en déduit:
\[j_{*}(h_{*}h^{*}K)=h'_{*}j'_{*}(h^*K).
\]
Comme $V_{d-1}$ est de type $(S_{d-1})$, par hypothèse de récurrence $j'_{*}(h^{*}K)$ est constructible et également $h'_{*}j'_{*}(h^*K)$ en appliquant à nouveau \ref{pulcons}, \ref{base2} et \cite[Intro, Thm.1]{ILO}.
Enfin la constructibilité de $j_{*}i_*i^!K$ se déduit du carré commutatif:
$$\xymatrix{H_{d-1}\ar[r]\ar[d]_{i}&F_{d-1}\ar[d]\\U\ar[r]^{j}&T},$$
 du fait que $F_{d-1}\rightarrow T$ est une immersion fermée et que $H_{d-1}\rightarrow F_{d-1}$ est une immersion ouverte entre $k$-schémas de type fini donc préserve les constructibles.
\end{proof}

\begin{cor}\label{fin1}
Soit $f:T'\rightarrow T$ un morphisme  séparé de type fini entre schémas de type $(S)$, alors les foncteurs $(f_{*},f_{!},f^{*},f^{!})$ préservent les constructibles.
\end{cor}
\brem On rappelle que dans \cite[Exp XVIII, Thm.3.1.4]{SGA4}, Deligne définit le foncteur $f^!$ pour tout morphisme compactifiable, i.e. qui se factorise en une immersion ouverte suivie d'un morphisme propre, ce qui est le cas d'un morphisme de séparé de type fini d'après Nagata \cite[Tag. 0F41]{Pro}.
\erem
\begin{proof}
Pour $f^{*}$ c'est immédiat (\cite[Tag. 095G]{Pro}), pour $f_{!}$, cela se déduit du changement de base propre \cite[Exp. XIV]{SGA4}.
Pour $f_{*}$, en utilisant la compactification de Nagata, $f$ se factorise en $f=g\circ j$ avec $g$ propre et $j$ une immersion ouverte quasi-compacte.
En particulier, le résultat se déduit du \Cref{immc} et de la finitude de $g_{*}$ comme $g$ est propre.
Il reste donc le cas de $f^{!}$. Comme l'assertion est locale sur $T'$, on factorise $f$ en $f=p\circ i$ avec $i$ une immersion fermée de type fini et $p$ lisse.
Comme $p$ est lisse, on $p^{!}=p^{*}[2d]$ où $d$ est la fonction localement constante dimension relative de $p$ (d'après \cite[Exp XVIII,Thm. 3.2.5]{SGA4}). Il suffit donc de montrer le résultat pour $f=i$.
Soit $\la$ l'immersion ouverte complémentaire, qui est quasi-compacte, $K\in D_{c}^{b}(T,\bZ/n)$, on a un triangle distingué:
\[i_{*}i^{!}K\rightarrow K\rightarrow \la_{*}\la^{*}K
\]
D'après \ref{immc}, comme $K$ est constructible, $\la_{*}\la^{*}K$ l'est aussi et par conséquent $i_{*}i^{!}K$.
Comme $i^{!}K=i^{*}(i_{*}i^{!}K)$, on conclut.
\end{proof}
\begin{cor}\label{fin2}
Soit $T$ de type $(S)$, $K,L\in D^{b}_{c}(T,\bZ/n)$, alors $\Rhom(K,L)\in D^{b}_{c}(T,\bZ/n)$.
\end{cor}
\begin{proof}
L'assertion est locale sur $T$ et en filtrant $K$, d'après \cite[Exp. IX, Prop.2.5]{SGA4}, on peut supposer que $K\simeq k_{!}\bZ/n$ où $k$ est une immersion localement fermée constructible.
On a alors:
\[\Rhom(K,L)=\Rhom(k_{!}\bZ/n,L)\simeq k_{*}k^{!}L,
\]
et l'assertion se déduit de \ref{fin1}.
\end{proof}

\subsection{Constructibilité sur les  $\cL$-schémas}
 Dans cette section, le but est d'obtenir l'énoncé de finitude \ref{finitude0}.

Soit $S$ un $k$-schéma qcqs, on dit que c'est un \textsl{$\cL$-schéma} s'il admet un atlas formel tel que dans le théorème \ref{fonda} et une stratification finie constructible du type du théorème \ref{fonda2}. Il résulte de la preuve de la proposition \ref{findec} que tout schéma de présentation finie sur un $\cL$-schéma est un $\cL$-schéma.
On commence par un cas particulier de \ref{finitude0}:
	\begin{prop}\label{partic}
	Soit $f:Z_2\rightarrow Z_1$ un morphisme séparé de présentation finie de $\cL$-schémas avec $Z_1=T\times\ab^{\NN}$ où $T$ est de type (S), alors les foncteurs $(f_{!}, f_{*}, f^{!}, f^{*})$ préservent les catégories constructibles.
	\end{prop}
	\begin{proof}
	Tout d'abord, il est à noter que comme $f$ est séparé de présentation finie, il se descend à un cran fini de telle sorte que $Z_2$ provient d'un schéma $\overline{Z}_{2}$ séparé de présentation finie sur un $T\times\ab^{r}$ avec
	\[Z_{2}\simeq \overline{Z}_{2}\times_{T\times\ab^{r}}Z_{1}.\]
	et $\overline{Z}_{2}$ est donc aussi de type (S).
	On commence par la paire $(f_!,f_*)$, soit $K\in D^{b}_{c}(Z_2,\bZ/n\bZ)$.
Par un argument de suite spectrale \cite[Tag. 015J]{Pro}, on se ramène à $K$ qui est un faisceau constructible de $\bZ/n$-modules.
On utilise alors \cite[Tag. 09YU]{Pro} pour dire que $K$ se descend à un cran fini et sans restreindre la généralité, quitte à agrandir $\overline{Z}_2$, on peut supposer que $K=h^{*}K_0$ avec $K_{0}\in\Sh_{c}(\overline{Z}_{2,\et})$ et $h:Z_2\rightarrow\overline{Z}_2$ est pro-lisse puisque qu'il s'obtient par changement de base de la projection:
\[T\times\ab^{\NN}\rightarrow T\times\ab^{r}.\]
Pour $f_!$, l'énoncé se déduit du changement de base propre et du corollaire \ref{fin1}, pour $f_*$ cela se déduit des propositions \ref{base1} et \ref{fin1}.
Passons à la paire $(f^*, f^!)$, pour $f^*$ c'est immédiat (\cite[Tag. 095G]{Pro}).
Pour $f^!$ le même argument que \ref{fin1} montre qu'il suffit de traiter le cas d'une immersion fermée $i$ et dans ce cas en utilisant le triangle distingué avec l'immersion ouverte complémentaire $j$, cela se déduit du fait que $(j_*,j^{*}, i^*)$ préservent les constructibles.
	\end{proof}
	
	\begin{theorem}\label{finitude0}
	Soit $f:S_1\rightarrow S$ un morphisme séparé de présentation finie de $\cL$-schémas, alors les foncteurs $(f_{!}, f_{*}, f^{!}, f^{*})$ préservent les catégories constructibles.
	\end{theorem}
	\begin{proof}
	Pour $f^*$, c'est immédiat par construction du foncteur. On commmence par montrer l'énoncé pour $(f_{!}, f_{*})$.
On considère $Z$ un atlas formel de $S$, soit $Z_{1}:=S_{1}\times_{S}Z$.
D'après \ref{base1}, \ref{pulcons} et  par le changement de base propre, il suffit de montrer l'énoncé en remplaçant $S_{1}$ par $Z_{1}$.
Soit $K\in D^b_c(Z_{1,\et},\bZ/n\bZ)$, d'après \ref{fonda}, on a une immersion décente:
\[Z\hookrightarrow T\times\ab^{\NN}\]
que l'on peut écrire comme une limite d'immersions fermées de présentation finie d'après \ref{limnil} où $T$ est de type (S). En particulier, on peut descendre $Z_1$ en $\bar{Z}_1$ séparé de présentation finie sur $T\times\ab^{\NN}$ et quitte à agrandir $\bar{Z}_1$, on peut supposer que $K$ se descend aussi, l'énoncé se déduit alors de \ref{partic} et de l'invariance topologique du topos étale (\cite[Tag. 03SI]{Pro}).
Pour $f^!$, le même argument que \ref{partic} permet de se ramener au cas où $f$ est une immersion fermée $i$ et on conclut de même que pour \ref{partic} à l'aide de la constructiblité de $(j_*,j^*,i^*)$ où $j$ est l'immersion ouverte complémentaire.
	\end{proof}
		
	\begin{prop}\label{rhom}
		Soient $K, L\in D^{b}_{c}(S,\bZ/n)$ avec $S$ un $\cL$-schéma, alors $\Rhom(K,L)\in D^{b}_{c}(\Set,\bZ/n)$.
	\end{prop}
	\begin{proof}
	L'assertion est locale  sur $S$, on peut supposer, quitte à filtrer $K$ qu'il est de la forme $i_{!}\bZ/n$ pour une immersion localement fermée constructible. On a alors par adjonction:
\[\Rhom(K,L)=i_{*}\Rhom(\bZ/n,i^{!}L)=i_{*}i^{!}\bZ/n\]
qui est constructible par le théorème \ref{finitude0}.
	\end{proof}
On peut maintenant en déduire le résultat suivant de finitude de la cohomologie qui étend la situation de Drinfeld \cite[6.8-6.9]{Dr}:
\begin{theorem}\label{finitude}
	Soient $S$ un $\cL$-schéma, $n\in\NN$ inversible sur $S$, et $K\in D^{b}_{c}(S,\bZ/n)$, alors $R\Gamma(S,K)\in D^{b}_{c}(\Spec(k),\bZ/n)$.
	En particulier, pour tout $d\in\NN$, on a $R\Gamma(\cL X^{\leq d},K)\in D^{b}_{c}(\Spec(k),\bZ/n)$.
\end{theorem}
\begin{proof}
On a vu que pour une immersion constructible, les foncteurs $(k_{*},k_{!},k^{*},k^{!})$ préservent les constructibles.
En particulier, en considérant les triangles distingués d'excision et comme $S$ est un $\cL$-schéma, on est ramené au cas produit $H\times\ab^{\NN}$ avec $H$ un $k$-schéma de type fini.
Par un argument de suite spectrale, on peut supposer que $K$ est un faisceau constructible de $\bZ/n$-modules.
D'après \cite[Tag. 09YU]{Pro}, $K$ se  descend à un cran fini en un faisceau constructible $K_{r}$ sur $H\times\ab^{r}$ pour un certain $r$.
D'après \cite[Exp. XVI 1.2]{SGA4}, on a par passage à la limite l'isomorphisme:
\begin{center}
$R\Gamma(H_{\infty},K)=\varinjlim\limits_{m\geq r} R\Gamma_{c}(H_{m},h_{mr}^{*}K_{r})=R\Gamma(H_{r},K_{r})$
\end{center}
avec $h_{mr}:H_{m}\rightarrow H_{r}$ et où la dernière égalité se déduit par acyclicité des fibres. L'énoncé se déduit alors de l'énoncé à un cran fini.
\end{proof}
\subsection{Remarques s'agissant des $t$-structures}\label{details}
Une des motivations originales de ce travail était d'établir une théorie des faisceaux pervers sur des espaces d'arcs. Et plus précisément une théorie qui serait compatible à l'énoncé de Drinfeld-Grinberg-Kazhdan.
Pour tout espace d'arcs $\cL X$, on veut définir un complexe d'intersection $IC_{\cL X}\in D^{b}_{c}(\cL X^{\leq d},\bql)$ tel que pour tout $\g\in(\cL X^{\leq d})(\bar{k})$, on ait modulo des décalages:
\begin{equation}
f_{\g}^{*}IC_{\cL X}\simeq IC_{Y_{\g}^{\wedge}},
\label{pullback}
\end{equation}
où $Y_{\g}^{\wedge}$ est le schéma affine sous-jacent à un modèle formel de dimension finie de $\cL X^{\wedge}_{\g}$ obtenu par DGK et $f_\g:Y_{\g}^{\wedge}\ra\cL X$.
Comme localement pour la topologie pro-lisse les schémas $\cL X^{\leq d}$ se comportent comme des produits $T\times\ab^{\NN}$ pour des schémas de type $(S)$, une première étape est donc d'étudier la situation pour des schémas de type $(S)$.

On considère donc un morphisme affine de présentation finie $f:T\rightarrow S_1$.
On note $F=f^{-1}(\{0\})$ qui est donc un $k$-schéma de type fini, de telle sorte que le schéma affine $T^{\wedge}$ sous-jacent à la complétion de $T$ le long de $F$ est noethérien excellent. On note $U$ son ouvert complémentaire qui est également un $k$-schéma de type fini.

On veut donc comprendre la nature du morphisme de complétion $T^{\wedge}\ra T.$ Elle va être contrôlée par un sous-schéma fermé de $T$.
On a une flèche canonique $S_1\ra\cQ_1$ donnée par $(v,q,\xi)\mapsto q$ qui admet une section $\sigma:\cQ_1\ra S_1$, on forme alors $T_0:=\sigma^{*}T$, c'est un sous-schéma fermé de $T$, qui est de présentation finie sur $\cQ_1$, donc c'est un $k$-schéma de type fini.
On a alors le diagramme commutatif suivant:
$$\xymatrix{T^{\wedge}\ar[d]\ar[rr]&&T\ar[d]\\S_{1}^{\wedge}\cong\Spec(k[[a]])\ar[r]&\cQ_1\ar[r]^{\sigma}&S_1}$$
où $S_1^{\wedge}=\Spec(\cO_{S_1,0}^{\wedge})\cong\Spec(k[[a]])$ d'après la section \ref{sd}.
Ainsi, la flèche de complétion se factorise $T^{\wedge}\ra T$ se factorise via le sous-schéma fermé $T_0$ et on montre de plus que l'on a un isomorphisme:
\[T^{\wedge}\simeq T_0^{\wedge}\] où $T_0^{\wedge}$ est la complétion le long de $F\subset T_0$.
En particulier, comme $T_0$ est un $k$-schéma de type fini, donc excellent, le morphisme $T_0^{\wedge}\ra T_0$ est pro-lisse et donc le morphisme $T^{\wedge}\ra T$ s'identifie à la composée d'un morphisme pro-lisse avec une immersion fermée.

Maintenant, si l'on dispose d'un complexe d'intersection $IC_{T}$, 
on doit avoir $IC_{T}=j_{!*}IC_{U}$ pour $j:U\hra T$.
Or, on a le carré cartésien suivant:
$$\xymatrix{U^{\wedge}\ar[r]\ar[d]&T^{\wedge}\ar[d]\\ U\cap T_0\ar[d]\ar[r]&T_{0}\ar[d]\\U\ar[r]& T}.$$
Ainsi, si l'on veut obtenir une identité $f^{*}IC_{T}=IC_{T^{\wedge}}$ (en négligeant les décalages), il faut déjà que cette égalité soit vraie au-dessus de $U$.
Le problème est que la flèche $U^{\wedge}\ra U$ se factorise par l'immersion fermée $U\cap T_0\ra U$, laquelle n'a aucune raison de préserver le complexe d'intersection de $U$.
On se retrouve donc confronté au fait que les singularités sur $U$ sont différentes de celles de $U^{\wedge}$ et il n'y a donc pas d'espoir d'obtenir un complexe d'intersection compatible à la décomposition de DGK.

\address{ 
  \bigskip
  \footnotesize

  (A.\ Bouthier) \textsc{Sorbonne Université, IMJ-PRG, 4 Place Jussieu, 75005 Paris}\par\nopagebreak
  \textit{E-mail address}: \texttt{alexis.bouthier@imj-prg.fr}
	}
\end{document}